\title{Non-locally-free locus of O'Grady's ten dimensional example}
\author{Yasunari Nagai}
\address{
Graduate School of Mathematical Sciences, the University of Tokyo, 
3-8-1 Komaba Meguro Tokyo 153-8914, Japan}
\email{nagai@ms.u-tokyo.ac.jp}
\date{March 29, 2010}
\theoremstyle{plain}
\newtheorem{theorem}[subsection]{Theorem}
\newtheorem{proposition}[subsection]{Proposition}
\newtheorem*{theorem*}{Theorem}
\newtheorem*{corollary*}{Corollary}
\theoremstyle{definition}
\newtheorem{definition}[subsection]{Definition}
\theoremstyle{remark}
\newtheorem*{claim}{Claim}
\newtheorem{remark}[subsubsection]{Remark}
\DeclareSymbolFont{cmletters}{OML}{cmm}{m}{it}
\DeclareSymbolFont{cmsymbols}{OMS}{cmsy}{m}{n}
\DeclareSymbolFont{cmlargesymbols}{OMX}{cmex}{m}{n}
\DeclareMathSymbol{\myjmath}{\mathord}{cmletters}{"7C}
\DeclareMathSymbol{\myamalg}{\mathbin}{cmsymbols}{"71}
\DeclareMathSymbol{\mycoprod}{\mathop}{cmlargesymbols}{"60}
\let\jmath\myjmath
\let\amalg\myamalg
\let\coprod\mycoprod
\def\lto{\longrightarrow}
\DeclareMathOperator{\Supp}{Supp}
\DeclareMathOperator{\Pic}{Pic}
\DeclareMathOperator{\Hilb}{Hilb}
\DeclareMathOperator{\Sym}{Sym}
\DeclareMathOperator{\Proj}{Proj}
\DeclareMathOperator{\tr}{tr}
\DeclareMathOperator{\id}{id}
\DeclareMathOperator{\Aut}{Aut}
\DeclareMathOperator{\Hom}{Hom}
\DeclareMathOperator{\End}{End}
\DeclareMathOperator{\diag}{diag}
\DeclareMathOperator{\Ker}{Ker}
\DeclareMathOperator{\rank}{rank}
\DeclareMathOperator{\length}{length}
\DeclareMathOperator{\Quot}{Quot}
\def\git{/\!\!/}
\begin{document}

\baselineskip 17.6pt
\parskip 6pt

\maketitle
\vspace{-1cm}

\begin{abstract}
We give a completely explicit description of the fibers of 
the natural birational morphism from O'Grady's ten dimensional 
singular moduli space of sheaves on a K3 surface to the corresponding 
Donaldson--Uhlenbeck compactification. 
\end{abstract}

\section*{Introduction}

O'Grady's construction of a new example of irreducible symplectic manifold 
\cite{OG} had a big impact to the study 
of holomorphic symplectic manifold, as it has been understood to be difficult to 
construct an irreducible symplectic manifold that is not deformation 
equivalent to the examples given in \cite{B}. Since then, several mathematicians 
studied the properties of O'Grady's example. In the course of the study, 
it turned out to be important to understand the locus of non-locally-free sheaves 
on the singular moduli space that O'Grady considered, see e.g. \cites{R,P}. 
The locus is closely related to the topology, the singularities, and the geometry of 
O'Grady's example. Actually, O'Grady already considered 
the non-locally-free locus of the moduli space in his article \cite{OG} 
to show that the example he constructed has the different second Betti number 
than that of previously known examples. In this article, 
we obtain an explicit and more or less complete understanding of 
the non-locally-free locus of O'Grady's singular moduli space. 

Let $S$ be a projective K3 surface with Picard number one. 
What we mean by ``O'Grady's singular moduli space'' 
is the moduli space $M$ of Gieseker-semistable sheaves on $S$ of 
rank $2$, $c_1=0$, and $c_2=4$. 
By the general theory (\cite{H-L}, Definition-Theorem 8.2.8), 
we have a projective morphism 
\[
\varphi : M\to M^{DU}
\]
to the Donaldson-Uhlenbeck compactification $M^{DU}$. 
In our case, this turns out to be a birational morphism and 
the exceptional divisor $B$ of $\varphi$ is nothing but the non-locally-free locus 
of the moduli space $M$. Our Main Theorem (Theorem \ref{main}) gives an explicit description 
of the non-trivial fibers of the morphism $\varphi$.  

Theoretically, one can describe every fiber of the morphism $\varphi$ as 
a GIT quotient of an appropriate subvariety of a Quot scheme (see \S 1, see also \emph{op. cit.}, Chap. 8). 
However, such a description is not suitable for an explicit study; it is almost hopeless 
to really calculate the associated homogeneous coordinate ring, 
for the defining equation, or the Grothendieck-Pl\"ucker embedding, 
of a Quot scheme can already be very complicated.  

Instead of Quot scheme, we use a quiver-variety-like description of the fiber of $\varphi$ (\S 2). 
An obvious merit of this description is that one can really calculate the homogeneous invariant ring 
corresponding to the GIT quotient description of the fiber within the framework of classical 
invariant theory (\S3). Here we need to appeal for a brute force calculation using Gr\"obner basis 
to determine the relations among the generators of the invariant ring. 
We should also remark that the description cannot be globalized, 
meaning that one cannot obtain an explicit description of the whole non-locally-free locus $B$ by 
this method. 

\paragraph{\bf Acknowledgment}
The author would like to express his gratitude to Manfred Lehn for his suggestions 
and encouragements. He largely benefited from the discussions with him. 
He also thanks Arvid Perego for stimulating discussions. 
Most part of this work was done when the author was in Johannes Gutenberg-Universit\"at 
Mainz, where he was supported by SFB/TR 45 of DFG, Deutsche Forschungsgemeinschaft. 

\section{The non-locally-free locus}
Let $S$ be a K3 surface with $\Pic (S)=\mathbb Z[H]$, where $H$ is an ample divisor on $S$, 
and let $M$ be the moduli space of $H$-semistable torsion free coherent sheaves on $S$ with 
rank $2$, $c_1=0$, and $c_2=4$. If we denote the locus of strictly semistable sheaves 
by $\Sigma$, one can immediately know that $M$ is singular along $\Sigma$. Moreover, 
we have a stratification $\Sigma =\Sigma ^0\amalg \Sigma ^1$ by 
\[
\begin{aligned}
\Sigma ^0 &= \{[I_Z\oplus I_W] \mid Z, W\in \Hilb ^2(S),\; Z\neq W\}, \\
\Sigma ^1 &= \{[I_Z^{\oplus 2}] \mid Z\in \Hilb ^2(S)\}, 
\end{aligned}
\]
where the square brackets stand for the S-equivalence classes (\cite{OG}, Lemma 1.1.5). 

Let $B$ be the locus of non-locally free sheaves on $M$. Obviously, $B$ contains $\Sigma$. 
As is explained in the introduction, $B$ can be captured as the exceptional divisor of 
a projective birational morphism
\[
\varphi : M\to M^{DU}
\]
to the Donaldson-Uhlenbeck compactification (\cite{H-L}, Chap. 8). 
If $[E]\in B$, the double dual $E^{**}$ is a $\mu$-semistable sheaf and we have 
a short exact sequence
\[
0\lto E\lto E^{**} \lto Q(E)\lto 0
\]
with $Q(E)$ of length $c_2(E)-c_2(E^{**})$. We define the associated cycle $\gamma (Q)$ 
to an artinian coherent sheaf $Q$ by 
\[
\gamma (Q)=\sum _{p\in S} \length(Q_p)\cdot p \in \Sym ^l(S)\quad (l=\length (Q)).
\]
The correspondence $\varphi$ is, roughly speaking, given by 
\begin{equation}\label{DUcorresp}
E\mapsto (E^{**}, \gamma (Q(E)))
\end{equation}
(\cite{H-L}, Theorem 8.2.11). In our case, we have 
$E^{**}\cong \mathcal O_S^{\oplus 2}$ and $\length (Q(E))=4$ for every $[E]\in B$ 
(\cite{OG}, Proposition 3.1.1). This means that the first factor of the 
correspondence \eqref{DUcorresp} is always trivial, so that the restriction 
of $\varphi$ to $B$ is of the form
\[
\varphi _{|B} : B\to \Sym ^4(S); \quad E\mapsto \gamma(Q(E)).
\]
From this, we get a GIT description of $B$ as 
\[
B\cong \Quot (\mathcal O_S^{\oplus 2}, 4) \git SL(2),
\] 
where $\Quot (\mathcal O_S^{\oplus 2}, 4)$ stands for the Quot scheme of length 4 quotients 
of $\mathcal O_S^{\oplus 2}$ (cf. \cite{OG}, \S 3.4). 
We have a similar description of the fibers of $\varphi _{|B}$ as
follows. There is a natural morphism $\Quot (\mathcal O_S^{\oplus 2}, 4)\to \Sym ^4(S)$ 
by the correspondence 
\[
[\mathcal O_S^{\oplus 2}\to Q] \mapsto \gamma (Q).
\]
Denote by $\Quot _{\gamma}$ the fiber of this morphism over $\gamma \in \Sym^4(S)$. Then we have 
\begin{equation}\label{Quotquot}
\varphi _{|B}^{-1}(\gamma) \cong \Quot _{\gamma} \git SL(2).
\end{equation}
O'Grady proved that the general fiber of $\varphi _{|B}$, namely the case where $\gamma$ consists of 
four different points on $S$, is isomorphic to $\mathbb P^1$ (\emph{op. cit.}, Proposition 3.0.5). 
Note that from this fact, we know that $B$ is a codimension 1 subvariety of $M$, since $\dim M=10$. 

Let $\eta=[1^{\eta _1}, 2^{\eta _2}, 3^{\eta _3}, 4^{\eta _4}]$ be a partition of $4$, namely, 
$4=1\cdot \eta _1 + 2\cdot \eta _2 + 3\cdot \eta _3 + 4\cdot \eta _4$ with $\eta _i$ non-negative integers. 
Then, we have a natural stratification 
\[
\begin{aligned}
\Sym ^4(S) &=\coprod _{\eta} S^{(\eta)}, \\
S^{(\eta)} &=\left\{ \gamma = \sum _{m=1}^4 \sum _{i=1}^{\eta _i} m\cdot p_{m,i} 
\; \Bigg| \; p_{ij}\in S \text{ are distinct points} \right\}.
\end{aligned}
\]
A 0-cycle $\gamma \in \Sym ^4(S)$ is said to be of type $\eta$ if $\gamma \in S^{(\eta)}$. Let $B_{\gamma}$ be 
the fiber of $\varphi _{|B}:B\to \Sym ^4(S)$ \emph{with the reduced scheme structure}.

\begin{theorem}\label{main}
Let $\gamma\in \Sym ^4(S)$, $B_{\gamma}$, and $\Sigma$ as above. 

\begin{enumerate}[\rm(i)]

\item If $\gamma$ is of type $[1^4]$, $B_{\gamma}\cong \mathbb P^1$ and $(B_{\gamma}\cap \Sigma)_{red}$ is 
a three point set.
 
\item If $\gamma$ is of type $[1^2, 2]$, $B_{\gamma}\cong \mathbb P^2$ and $(B_{\gamma}\cap \Sigma)_{red}$ 
is a disjoint union of a line $l$ and a point $P$. More precisely, if $\gamma = p_1+p_2+2q$, 
\[
\begin{aligned}
l &=\{[I_{p_1+p_2}\oplus I_Z] \mid Z\in \Hilb ^2(S), \Supp Z=\{q\} \}\cong \mathbb P^1,\\
P&= [I_{p_1+q}\oplus I_{p_2+q}].
\end{aligned}
\]

\item If $\gamma$ is of type $[2^2]$, $B_\gamma$ is a quadric cone in $\mathbb P^4$. 
$(B_{\gamma}\cap \Sigma)_{red}$ is a disjoint union of a smooth hyperplane section
$T$ and the vertex $P$ of $B_\gamma$. More precisely, if $\gamma = 2p_1+2p_2$, 
\[
\begin{aligned}
T &=\{[I_{Z_1}\oplus I_{Z_2}] \mid Z_i\in \Hilb ^2(S), \Supp Z_i=\{p_i\}\, (i=1,2) \}\cong \mathbb P^1\times \mathbb P^1,\\
P&= [I_{p_1+p_2}^{\oplus 2}].
\end{aligned}
\]

\item If $\gamma$ is of type $[1,3]$, $B_\gamma$ is isomorphic to a quadric of rank 2 in $\mathbb P^4$, 
namely, the singular scroll in $\mathbb P^4$ that is the image of $\mathbb P_{\mathbb P^1}(\mathcal O^{\oplus 2}
\oplus \mathcal O(2))$. $(B_{\gamma}\cap \Sigma)_{red}$ is the line $l$ of vertices of $B_\gamma$ parametrizing 
\[
l=\{[I_{p+q}\oplus I_Z] \mid Z\in \Hilb ^2(S), \Supp Z=\{q\} \}\cong \mathbb P^1
\]
if $\gamma =p+3q$. 

\item If $\gamma$ is of type $[4]$, $B_\gamma$ is an irreducible divisor of the singular scroll
$\overline{\mathbb P}_{\mathbb P^1}(\mathcal O^{\oplus 3}\oplus \mathcal O(1)\oplus \mathcal O(4))
\subset \mathbb P^9$. 
The strict transform $B'_\gamma$ of $B_\gamma$ under the natural proper birational morphism 
\[
\mathbb P _{\mathbb P^1}(\mathcal O^{\oplus 3}\oplus \mathcal O(1)\oplus \mathcal O(4))\to 
\overline{\mathbb P}_{\mathbb P^1}(\mathcal O^{\oplus 3}\oplus \mathcal O(1)\oplus \mathcal O(4))
\]
is a topologically locally trivial family of quadric hypersurfaces of rank 2 
in the fiber $\mathbb P^4$ of $\mathbb P _{\mathbb P^1}(\mathcal O^{\oplus 3}\oplus \mathcal O(1)\oplus \mathcal O(4))$. 
$(B_{\gamma}\cap \Sigma)_{red}$ is the plane $\Pi =\overline{\mathbb P}_{\mathbb P^1}(\mathcal O^{\oplus 3})$ 
parametrizing 
\[
\Pi =\{[I_{Z_1}\oplus I_{Z_2}] \mid Z_i\in \Hilb ^2(S), \Supp Z_i=\{p\}\, (i=1,2) \} \cong \mathbb P^2 
\]
if $\gamma = 4p$. Moreover, the envelope $\Delta$ of the family of lines on $\Pi$ that is the image of 
the vertex lines of the fibers of $B'_{\gamma}$ is a non-singular quadric on $\Pi$ parametrizing
\[
\Delta = (B_\gamma\cap \Sigma ^1)_{red}
=\{[I_Z^{\oplus 2}] \mid Z\in \Hilb ^2(S), \Supp (Z)=\{p\} \}.
\]
\end{enumerate}
\end{theorem}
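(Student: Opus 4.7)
The plan is to use the identification $\varphi_{|B}^{-1}(\gamma)\cong \Quot_\gamma \git SL(2)$ of \eqref{Quotquot} and to carry out an explicit invariant-theoretic calculation, one partition $\eta$ at a time. Since the points in the support of $\gamma$ are distinct, any length-$4$ quotient $\mathcal{O}_S^{\oplus 2}\to Q$ with $\gamma(Q)=\gamma$ splits as a direct sum $Q=\bigoplus_i Q_{p_i}$ of its stalks, and correspondingly
\[
\Quot_\gamma \;\cong\; \prod_i \Quot_{m_i,\, p_i},
\]
where $\Quot_{m,p}$ denotes the Quot scheme of length-$m$ quotients of $\mathcal{O}_{S,p}^{\oplus 2}$. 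Since $SL(2)$ acts diagonally on this product, the whole problem reduces to an invariant computation for a small, concrete list of linear-algebra data, one datum per point of the support.

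For each local factor I would use the quiver-variety-like description developed in \S2: a surjection $\mathcal{O}_{S,p}^{\oplus 2}\to Q$ is encoded by the pair of commuting nilpotent endomorphisms of $Q$ induced by local parameters at $p$, together with the images in $Q$ of the two generators of $\mathcal{O}^{\oplus 2}$ (subject to a cyclicity condition), with the intrinsic $GL(Q)$ acting by change of frame and the extrinsic $SL(2)$ acting on the pair of generators. For $m=1,2,3$ this presentation is small-dimensional and already suggests the shape of the answer (for instance $\Quot_{1,p}\cong \mathbb{P}^1$ from the outset), and the Jordan-type stratification of the nilpotent pair is what picks out the $\Sigma$-strata appearing in the statement.

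Cases (i)--(iv) should then follow from standard $SL(2)$-invariant theory applied to the resulting product. For type $[1^4]$ this is the classical $(\mathbb{P}^1)^4 \git PGL(2)\cong \mathbb{P}^1$ via the cross-ratio, and the three strictly semistable points correspond to the three unordered pairings of the four points, matching the polystable decompositions $[I_{p_i+p_j}\oplus I_{p_k+p_l}]$. Types $[1^2,2]$ and $[2^2]$ combine this with the invariant ring of $\Quot_{2,p}$, which one computes directly to be the coordinate ring of $\mathbb{P}^2$, respectively of a quadric cone in $\mathbb{P}^4$. Type $[1,3]$ is analogous with $\Quot_{3,p}$, producing a rank-$2$ quadric scroll. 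In every case the strictly semistable stratum on the GIT side is pinned down by polystability of the cyclic module $Q_p$, which translates directly into the sheaf-theoretic description of $(B_\gamma\cap\Sigma)_{red}$.

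The main obstacle, as the introduction already anticipates, is case (v), $\gamma=4p$: the fiber is governed by the single local factor $\Quot_{4,p}$, whose $SL(2)$-invariant ring is no longer within reach by elementary manipulation. This is where I would carry out the Gr\"obner basis calculation of \S3: exhibit a natural system of generators using the $SL(2)$-isotypic decomposition of the coordinate ring, determine the ideal of relations by a direct Gr\"obner basis run, and identify $\Proj$ of the resulting ring with an irreducible divisor on the singular scroll $\overline{\mathbb{P}}_{\mathbb{P}^1}(\mathcal{O}^{\oplus 3}\oplus \mathcal{O}(1)\oplus \mathcal{O}(4))\subset\mathbb{P}^9$. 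The remaining features specific to (v)---that the strict transform $B'_\gamma$ on the smooth scroll is a topologically locally trivial family of rank-$2$ quadrics, that $(B_\gamma\cap\Sigma)_{red}$ is the planar section $\Pi\cong\mathbb{P}^2$, and that the envelope $\Delta$ of vertex lines coincides with the $\Sigma^1$-locus $\{[I_Z^{\oplus 2}]\}$---should then emerge from the natural fibration structure on $\Quot_{4,p}$ coming from the filtration of a length-$4$ cyclic module over $\mathcal{O}_{S,p}$.
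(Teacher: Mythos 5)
Your reduction is, at bottom, the one the paper uses: $Y_\gamma=\Hom(V,\Gamma(\mathcal Q_\gamma))\times N_\gamma$ is exactly your per-point linear-algebra datum (images of the two generators plus a commuting nilpotent pair at each point of $\Supp\gamma$), and your two-stage quotient --- first by the local automorphism groups, giving $\prod_i\Quot_{m_i,p_i}$, then by $SL(2)$ --- differs only cosmetically from the paper's single quotient by $G_\gamma=GL(V)\times\Aut(\mathcal Q_\gamma)$ twisted by $\chi_\gamma$ (cf. Remark \ref{toQuot}). The first genuine gap is your treatment of the strictly semistable locus. ``Polystability of the cyclic module $Q_p$'' is not the right condition and cannot be, because strict semistability of $E=\Ker\Psi$ couples the local factors: by Proposition \ref{stability} (which the paper proves via King's numerical criterion, and which simultaneously fixes the linearization $\chi_\gamma$ --- a choice your product formulation leaves dangling) the criterion is the existence of a one-dimensional $W\subset V$ with $\dim\Psi(W\otimes\mathcal O_S)=2$. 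In type $[1^4]$ every stalk of $Q$ is $\mathbb C$, so there is no per-point polystability at all, yet three strictly semistable orbits appear; in type $[1^2,2]$ the point $P=[I_{p_1+q}\oplus I_{p_2+q}]$ comes from a line $W$ whose image has length one at $q$ and length one at one of the $p_i$, invisible to any condition on the single stalk $Q_q$. Without this global criterion none of the descriptions of $(B_\gamma\cap\Sigma)_{red}$ in (i)--(v) can actually be derived.

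The second gap concerns generators of the invariant rings. A Gr\"obner basis run determines the relations among \emph{given} generators --- that is all the paper uses it for, and it is needed already in types $[1^2,2]$ and $[1,3]$, not only in $[4]$ --- but it does not produce a generating set for the invariants of $SL(V)\times SL(W)$ on $\Hom(V,W)\times N_4$, and ``the $SL(2)$-isotypic decomposition of the coordinate ring'' is not a mechanism for doing so. The paper's engine is the first fundamental theorem in symbolic form (\cite{PV}, Theorem 9.3), made finite by the commutativity and nilpotency of $(A,B)$, which bound the admissible words; in the decisive case (v) it further requires translating $SL(W)$-invariants of $\wedge^2W\cong\mathbb C^6$ into $SO(6)$-invariants via $T:\mathfrak{sl}(W)\overset{\sim}{\to}\mathfrak{so}(\wedge^2W)$, with the trap (which the paper flags) that $T$ preserves brackets but not matrix products, so one must keep words in all the $T(A^iB^j)$, whence the extra generators $\xi_9,\xi_{10}$. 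Finally, the assertions in (v) about $B'_\gamma$, $\Pi$ and $\Delta$ do not ``emerge from the natural fibration structure'' on $\Quot_{4,p}$ --- you never construct such a fibration: in the paper they are read off from the explicit equations \eqref{[4]scroll}--\eqref{[4]hyp}, whose pullback to $\mathbb P_{\mathbb P^1}(\mathcal O^{\oplus 3}\oplus\mathcal O(1)\oplus\mathcal O(4))$ collapses to the single equation \eqref{[4]fiber}, visibly a family of rank-2 quadrics whose vertex lines envelope the conic $\Delta\subset\Pi$, and from evaluating the invariants on normal forms of $(A,B)$ in the two Claims of \S 3.5.
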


The rest of the article is devoted to the proof of this theorem. 
Note that the first assertion (i) is nothing but Proposition 3.0.5 of \cite{OG}. 

\begin{remark}
From a view to the geometry of irreducible symplectic manifold, 
it should be more interesting to look at the strict transform $\widetilde B_{\gamma}$ 
of $B_{\gamma}$ under O'Grady's resolution $\pi: \widetilde M\to M$. 
According to \cite{L-S}, O'Grady's symplectic resolution $\pi$ is nothing 
but the blowing-up along the locus of strictly semistable sheaves $\Sigma _{red}$. 
However, it is not so easy to analyze the induced blowing-up  
$\widetilde B_{\gamma}=Bl _{B_{\gamma}\cap (\Sigma _{red})} B_{\gamma} \to B_{\gamma}$, 
since the scheme theoretic intersection $B_{\gamma}\cap (\Sigma _{red})$ tends to have 
highly non-trivial non-reduced scheme structure over deeper strata. 
\end{remark}

\section{GIT description of the fiber of $\varphi$}

The description \eqref{Quotquot} of $B_{\gamma}$ by Quot scheme is not 
suitable to prove Theorem \ref{main}, since $\Quot _{\gamma}$ already has a complicated 
projective geometry so that it is hopeless to put the GIT quotient involved in \eqref{Quotquot} 
into an actual calculation. Instead, we use a quivar-variety-like description of $B_{\gamma}$, 
which can be seen as an analogy of the description of $\Hilb ^n(\mathbb C^2)$ as a hyper-K\"ahler 
quotient (see, for example, \cite{N} Chap. 2 \& 3, in particular, Theorem 3.24). 

\begin{definition}
Let $V=\mathbb C^2$. 
For a 0-cycle $\gamma =\sum m_i p_i \in \Sym ^4(S)$, 
we define a sheaf of $\mathbb C$-vector spaces $\mathcal Q_{\gamma}$ 
of finite length by 
\[
\mathcal Q_{\gamma, x}=
\begin{cases}
\mathbb C^{m_i} & \text{if } x=p_i,\\
0 & \text{otherwise} .
\end{cases}		   
\]
Define 
\[
N_m=\{(A,B)\in \mathfrak{sl} (\mathbb C^m)^{\oplus 2}
\mid [A, B]=0,\; A^iB^j=O \text{ for } i+j=m\}, 
\]
and define $N_{\gamma}$ by
\[
N_{\gamma}=N_1^{\eta _1}\times N_2^{\eta _2}\times
N_3^{\eta _3}\times N_4^{\eta _4}
\]
if $\gamma$ is of type $\eta=[1^{\eta _1}, 2^{\eta _2}, 3^{\eta _3}, 4^{\eta _4}]$.
Here we note that $N_1$ is a reduced one point set. 
We define $Y_{\gamma}$ by
\[
Y_{\gamma}=\Hom _{\mathbb C}(V,\Gamma (\mathcal Q_{\gamma}))\times N_{\gamma}, 
\]
and define $G_{\gamma}=GL(V)\times \Aut (\mathcal Q_{\gamma})$. 
We always let $\Aut (\mathcal Q_{\gamma})$ act on 
$N_{\gamma}$ by the adjoint action at each point 
of the support of $\mathcal Q_{\gamma}$. 
$G_\gamma$ can always be seen as a subgroup of 
$GL(V)\times GL(\mathbb C^4)$ by diagonal embedding. 
We define a character $\chi$ by
\[
\chi =(\det {}_V)^{-2}\cdot (\det {}_{\mathbb C^4}) : GL(V)\times GL(\mathbb C^4)
\to \mathbb C^*
\]
and let $\chi _{\gamma}$ be the composition 
\[
\chi _{\gamma}: G_{\gamma}\hookrightarrow GL(V)\times GL(\mathbb C^4)
\overset{\chi}{\to} \mathbb C^*.
\]
\end{definition}

\begin{theorem}\label{quiverlike}
Notation as above. We have an isomorphism 
\[
B_\gamma \cong Y_\gamma \underset{\chi _{\gamma}}{\git} G_\gamma
= \Proj \left( \bigoplus _{n=0}^{\infty} A(Y_{\gamma})^{G_{\gamma},\chi _{\gamma}^n} \right), 
\]
where $A(Y_{\gamma})$ is the affine coordinate ring of $Y_{\gamma}$ 
and $A(Y_{\gamma})^{G_{\gamma},\chi _\gamma^n}$ is the vector subspace consisting of
$G_{\gamma}$-semi-invariants of $A(Y_{\gamma})$ whose character is $\chi_{\gamma}^n$.
\end{theorem}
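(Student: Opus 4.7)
The plan is to identify $Y_\gamma\git_{\chi_\gamma}G_\gamma$ with $\Quot_\gamma\git SL(V)$, which in turn equals $B_\gamma$ by \eqref{Quotquot}. This will be achieved by first realizing $\Quot_\gamma$ itself as the GIT quotient of an open subset of $Y_\gamma$ by $\Aut(\mathcal{Q}_\gamma)$ (using the $\det_W$ factor of $\chi_\gamma$), and then matching the residual $GL(V)$-quotient with the natural $SL(V)$-quotient on $\Quot_\gamma$. This is a rank-two analogue of Nakajima's realization of $\Hilb^n(\mathbb{C}^2)$ as the geometric quotient by $GL(n)$ of the space of commuting matrix pairs together with a cyclic vector.

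For the first step, fix formal local coordinates at each support point $p_{m,i}$ of $\gamma$. Given a datum $(\phi,\{(A_{m,i},B_{m,i})\})\in Y_\gamma$, endow $\Gamma(\mathcal{Q}_\gamma)=\bigoplus W_{m,i}$ with an $\mathcal{O}_S$-module structure by letting the two local coordinates at $p_{m,i}$ act on the component $W_{m,i}\cong \mathbb{C}^m$ as $A_{m,i}$ and $B_{m,i}$. The relations $A^iB^j=0$ for $i+j=m$ built into the definition of $N_m$ guarantee that the resulting module is artinian of length exactly $m$ and supported set-theoretically at $p_{m,i}$, so the underlying cycle is $\gamma$. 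The linear map $\phi$ then extends uniquely to an $\mathcal{O}_S$-linear morphism $\tilde\phi\colon \mathcal{O}_S^{\oplus 2}\to \Gamma(\mathcal{Q}_\gamma)$, and $\tilde\phi$ is surjective exactly when $\phi_{m,i}(V)$ generates $W_{m,i}$ as a $\mathbb{C}[A_{m,i},B_{m,i}]$-module for every $(m,i)$. Let $Y_\gamma^{\circ}\subset Y_\gamma$ denote this open locus. The above assignment defines a morphism $Y_\gamma^{\circ}\to \Quot_\gamma$ exhibiting $Y_\gamma^{\circ}$ as a principal $\Aut(\mathcal{Q}_\gamma)$-bundle over $\Quot_\gamma$, whence $\Quot_\gamma\cong Y_\gamma^{\circ}/\Aut(\mathcal{Q}_\gamma)$; the $GL(V)$-action descends to the natural one on $\Quot_\gamma$.

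For the second step I would match the GIT stabilities via iterated quotients. The character $\chi_\gamma$ factors as $\det_V^{-2}\cdot \det_W$ on $G_\gamma=GL(V)\times \Aut(\mathcal{Q}_\gamma)$. A King-type Hilbert--Mumford analysis identifies the $\det_W$-semistable locus for the $\Aut(\mathcal{Q}_\gamma)$-action on $Y_\gamma$ with $Y_\gamma^{\circ}$: the destabilizing sub-object is a proper $N_\gamma$-invariant subspace $W'\subsetneq\Gamma(\mathcal{Q}_\gamma)$ containing $\phi(V)$, which exists precisely when generation fails. Since the diagonal subgroup $\mathbb{C}^*\subset G_\gamma$ of simultaneous scalars on $V$ and $\Gamma(\mathcal{Q}_\gamma)$ acts trivially on $Y_\gamma$ and has trivial $\chi_\gamma$, the residual $\det_V^{-2}$-linearization descends to a well-defined polarization on $\Quot_\gamma$ which is nothing but the natural $SL(V)$-polarization. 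Consequently,
\[
Y_\gamma\git_{\chi_\gamma}G_\gamma \;=\; \bigl(Y_\gamma\git_{\det_W}\Aut(\mathcal{Q}_\gamma)\bigr)\git_{\det_V^{-2}}GL(V) \;=\; \Quot_\gamma\git SL(V)\;\cong\; B_\gamma.
\]

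The main technical obstacle is the first step: upgrading the pointwise parametrization to a scheme-theoretic principal bundle. One has to verify that the $\mathcal{O}_S$-module constructed from $(\phi,(A_{m,i},B_{m,i}))$ varies flatly in families and that the classifying map is functorial. This is naturally done by working in étale neighborhoods of the $p_{m,i}$ in which local formal coordinates can be chosen uniformly, and then invoking the universal property of the Quot scheme; the principal bundle structure then follows from the rigidity of the resulting quotient sheaf.
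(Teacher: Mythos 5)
Your proposal is correct in outline but takes a genuinely different route from the paper. The paper proves Theorem \ref{quiverlike} in a single step: Proposition \ref{stability} runs the Hilbert--Mumford criterion for the \emph{full} group $G_\gamma$ with the character $\chi_\gamma$ and shows directly that $(G_\gamma,\chi_\gamma)$-(semi)stability of $\Psi$ is equivalent to surjectivity of $\Psi$ together with (semi)stability of $E=\Ker\Psi$, the latter being characterized via O'Grady's Lemma 1.1.5 by the condition $\dim\Psi(W\otimes\mathcal O_S)\geqslant 2$ for all lines $W\subset V$; the theorem then follows by the standard GIT construction of moduli (closed orbits correspond to S-equivalence classes). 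You instead quotient in stages: first by $\Aut(\mathcal Q_\gamma)$ with the $\det{}_W$-linearization, identifying the semistable locus with the surjective locus and realizing $\Quot_\gamma$ as the resulting geometric quotient --- this is exactly the content of the paper's Remark \ref{toQuot}, which the paper states without proof --- and then by the residual $GL(V)$-action, invoking \eqref{Quotquot}. Your staged decomposition of the semi-invariant ring is sound (the degree-$n$ piece of $\mathscr R_\gamma$ is the $\det{}_V^{-2n}$-isotypic part of $A(Y_\gamma)^{\Aut(\mathcal Q_\gamma),\det{}_W^n}$, and the central weights match, so the $GL(V)$-condition reduces to $SL(V)$-invariance), and so is your King-type analysis of the first stage. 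What the paper's route buys is the explicit pointwise criterion of Proposition \ref{stability}(ii), which is reused throughout \S 3 to locate $(B_\gamma\cap\Sigma)_{red}$; what your route buys is a conceptual factorization that makes Remark \ref{toQuot} precise and confines the new work to the $\Aut(\mathcal Q_\gamma)$-stage.

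The one load-bearing step you assert rather than prove is the final matching of polarizations: the descended $\mathcal O(1)$ on $(\Quot_\gamma)_{red}$, with its $\det{}_V^{-2}$-twisted $GL(V)$-linearization, must coincide with the polarization implicit in \eqref{Quotquot}, namely the determinant line bundle of O'Grady and Huybrechts--Lehn. This is not cosmetic: $SL(2)$-semistability on $\Quot_\gamma$ genuinely depends on the choice of ample class --- already for $\gamma$ of type $[1^4]$, where the quotient in question is $(\mathbb P^1)^4\git SL(2)$, replacing the symmetric polarization $\mathcal O(1,1,1,1)$ by an asymmetric one changes the semistable locus and the quotient. The paper only asserts this identification in the unlabeled remark following Remark \ref{toQuot} and never needs it, precisely because Proposition \ref{stability} checks stability upstairs by hand; in your argument it is where the entire stability comparison is hidden, so it requires an actual proof (e.g., computing the $G_\gamma$-weight of the determinant bundle on the universal quotient, or redoing the Hilbert--Mumford analysis for the residual action --- at which point you have essentially reproduced the paper's computation). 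A smaller point: your principal-bundle statement can only hold for $(\Quot_\gamma)_{red}$, since $\Quot_\gamma$ may be non-reduced (cf. Remark \ref{toQuot}); this is harmless because $B_\gamma$ carries the reduced structure, but it should be stated.
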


\begin{remark}\label{toQuot}
Let 
\[
\Psi =(\psi,\alpha)\in Y_{\gamma}
=\Hom (V,\Gamma (\mathcal Q_{\gamma}))\times N_{\gamma}.
\]
Since $N_{\gamma}$ parametrizes $\mathcal O_S$-module structures on 
$\mathcal Q_{\gamma}$, $\Psi$ defines \emph{a morphism of 
$\mathcal O_S$-modules}
\[
V\otimes \mathcal O_S\to \mathcal Q_{\gamma}.
\]
We denote this morphism also by $\Psi$ slightly abusing the notation. 
In particular, if it is surjective, $\Psi$ defines a point 
$\overline \Psi\in \Quot _{\gamma}\subset \Quot (V\otimes \mathcal O_S,4)$.
Moreover, every point in the $\Aut (\mathcal Q_{\gamma})$-orbit of $\Psi$ 
corresponds to $\overline \Psi$ and we can prove 
\[
(\Quot _{\gamma})_{red}\cong Y_{\gamma} \underset{\det _{\mathcal Q_{\gamma}}}{\git}
\Aut (\mathcal Q_{\gamma}). 
\]
\end{remark}

\begin{remark}
$\chi _{\gamma}$ is the character associated to the determinant line bundle 
for the universal family over the Quot scheme. It will turn out that $\chi _{\gamma}$ is 
the only meaningful polarization in the calculation of invariant rings in \S 3. 
\end{remark}

\begin{remark}
There is nothing to do with the condition $c_2=4$ in 
the description of the fiber $B_{\gamma}$ of the morphism to Donaldson-Uhlenbeck 
compactification in Theorem \ref{quiverlike}. We can easily generalize the theorem 
to the case of moduli space $M_S(2,0,2k)$ of semistable sheaves 
of rank $2$, $c_1=0$, and $c_2=2k$ for every positive integer $k$. 
\end{remark}

By the standard theory of GIT construction of a moduli space, 
the theorem is a consequence of the following

\begin{proposition}\label{stability}
Notation as above. Let $\Psi=(\psi, \alpha)\in Y_{\gamma}$. Then, the following are equivalent:
\begin{enumerate}[\rm(i)]
\item The morphism of $\mathcal O_S$-modules $\Psi : V\otimes \mathcal O_S\to \mathcal Q_{\gamma}$ is surjective and 
$E=\Ker \Psi$ is semistable (resp. stable).

\item $\Psi$ is surjective and for every one dimensional subspace $W\subset V$, 
$\dim \Psi(W\otimes O_S) \geqslant 2$ (resp. $\dim \Psi(W\otimes O_S) > 2$\,). 

\item $\Psi$ is a $(G_{\gamma},\chi)$-semistable (resp. $(G_{\gamma},\chi)$-stable) point. 
\end{enumerate}
\end{proposition}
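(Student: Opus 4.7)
The plan is to prove (i) $\Leftrightarrow$ (ii) by a direct sheaf-theoretic translation, and (ii) $\Leftrightarrow$ (iii) by a King-style application of the Hilbert--Mumford numerical criterion for the reductive group $G_\gamma = GL(V) \times \prod_i GL(\mathbb C^{m_i})$.

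For (i) $\Leftrightarrow$ (ii), I first reduce Gieseker-(semi)stability of $E = \Ker \Psi$ to a condition on rank-$1$ subsheaves. Since $\Pic(S) = \mathbb Z[H]$, any rank-$1$ torsion-free subsheaf of $V \otimes \mathcal O_S = \mathcal O_S^{\oplus 2}$ has $c_1 \leq 0$ (project to the line determined at the generic fibre), and a comparison of reduced Hilbert polynomials shows that $E$ is semistable (resp. stable) iff every rank-$1$ subsheaf $F = I_{Z_F} \subset E$ satisfies $\length(Z_F) \geq 2$ (resp. $> 2$). The next step classifies such subsheaves of $V \otimes \mathcal O_S$: the generic fibre of $F$ determines a line $\mathbb C v \subset V$, whence $F = I_{Z_F} \cdot v$ for some $Z_F$. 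Setting $M_v := \Psi(\mathbb C v \otimes \mathcal O_S)$, the restriction $v \cdot \mathcal O_S \to \mathcal Q_\gamma$ has kernel $I_{Z_v} \cdot v$ with $\length(Z_v) = \dim_\mathbb C M_v$, so $F \subset E$ forces $Z_v \subset Z_F$ and hence $\length(Z_F) \geq \dim M_v$, with equality attained when $Z_F = Z_v$. This identifies semistability (resp. stability) of $E$ precisely with $\dim M_v \geq 2$ (resp. $> 2$) for every $0 \neq v \in V$. Surjectivity of $\Psi$ must be included in both conditions since otherwise $\Ker \Psi$ does not have $c_2 = 4$.

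For (ii) $\Leftrightarrow$ (iii), apply the Hilbert--Mumford criterion. A $1$-parameter subgroup $\lambda = (\lambda_V, \lambda_Q)$ of $G_\gamma$ induces weight decompositions $V = \bigoplus V_i$ and $\Gamma(\mathcal Q_\gamma) = \bigoplus Q_j$; existence of $\lim_{t \to 0} \lambda \cdot \Psi$ in $Y_\gamma$ is equivalent to $\psi(V_{\geq i}) \subset Q_{\geq i}$ for every $i$ together with $Q_{\geq i}$ being a chain of $\mathcal O_S$-submodules of $\mathcal Q_\gamma$. The $\chi_\gamma$-weight of $\lambda$ on $\Psi$ is then a linear combination of $\dim V_i$ and $\dim Q_j$ weighted by $i$ and $j$. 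For the direction (iii) $\Rightarrow$ (ii), failures of (ii) are defeated by explicit destabilizing $1$-PS's: a non-surjective $\Psi$ by $\lambda_V = \id$ combined with $\lambda_Q$ that is trivial on $\iM(\Psi)$ and of weight $1$ on a complement; a failure $\dim M_v \leq 1$ by scaling $\mathbb C v$ upward and $M_v$ downward with appropriate integer weights. For (ii) $\Rightarrow$ (iii), the pointwise inequality $\dim M_v \geq 2$ combined with surjectivity of $\Psi$, summed along the weight filtration, gives the filtration-level estimate $\dim Q_{\geq i} \geq 2 \dim V_{\geq i}$ for every $i$, from which Abel summation yields the required sign of the $\chi_\gamma$-weight.

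The main obstacle will be this last step, namely promoting the single-vector condition $\dim M_v \geq 2$ to a filtration-level inequality valid for every weight decomposition of $V$. For this one must track, at each weight level and at each point of $\Supp \mathcal Q_\gamma$, how the $\mathcal O_S$-module structure encoded by $N_\gamma$ enlarges the naive $\mathbb C$-span of the $M_v$ inside $\mathcal Q_\gamma$; the nilpotency constraints $A^i B^j = 0$ for $i+j = m$ in the definition of $N_m$ enter decisively at this point.
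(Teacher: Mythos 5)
Your proposal is essentially correct, and for the half (ii)$\Leftrightarrow$(iii) it takes a genuinely different route from the paper. For (i)$\Leftrightarrow$(ii) the paper simply observes that $E=\Ker\Psi$ is $\mu$-semistable with any destabilizing subsheaf satisfying $F^{**}=W\otimes\mathcal O_S$, and then quotes O'Grady's Lemma 1.1.5; your direct derivation (only $c_1=0$ rank-one subsheaves matter since $\Pic(S)=\mathbb Z[H]$, the maximal one in a given direction $v$ is $\Ker(v\otimes\mathcal O_S\to\mathcal Q_\gamma)=I_{Z_v}\cdot v$ with $\length(Z_v)=\dim M_v$, and the Hilbert-polynomial comparison reduces to $\length(Z_F)\geqslant 2$, resp.\ $>2$) is a correct re-proof of the same content. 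For (ii)$\Leftrightarrow$(iii) the paper also uses King's criterion, but through explicit diagonal one-parameter subgroups $\lambda_{\mathbf r}$, $\mathbf r\in\mathbb Z^6$, and an exhaustive case analysis on the normalized weight vector, written out only for type $[1^2,2]$ with the other four types declared ``similar''. Your filtration argument is uniform in $\gamma$: limit existence forces each $Q_{\geqslant i}$ to be an $\mathcal O_S$-submodule containing $\psi(V_{\geqslant i})$, hence containing the submodule $\Psi(V_{\geqslant i}\otimes\mathcal O_S)$ it generates; since $\dim V=2$, each $V_{\geqslant i}$ is $0$, a line, or $V$, so condition (ii) gives $\dim Q_{\geqslant i}\geqslant 2\dim V_{\geqslant i}$ for every threshold, and Abel summation (after normalizing by the $\chi_\gamma$-trivial, trivially acting diagonal $(t\,\id_V,t\,\id_Q)$) yields $\langle\chi_\gamma,\lambda\rangle=\sum_{i\geqslant 1}\bigl(\dim Q_{\geqslant i}-2\dim V_{\geqslant i}\bigr)\geqslant 0$. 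This buys a single proof covering all partition types at once, and immediately the generalization to $c_2=2k$ that the paper only states as a remark. Note also that your anticipated ``main obstacle'' dissolves: the filtration-level estimate needs only the $A,B$-invariance of $Q_{\geqslant i}$, not the nilpotency relations $A^iB^j=O$ from the definition of $N_m$.

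Two local errors in your (iii)$\Rightarrow$(ii) sketch, both direction slips that your own stated convention exposes. For non-surjective $\Psi$ you must put weight $-1$, not $+1$, on a complement $C$ of $\iM\Psi$: with $+1$ the pairing is $+\dim C>0$ (no contradiction with semistability), and worse, $Q_{\geqslant 1}=C$ would have to be an $\mathcal O_S$-submodule, which an arbitrary complement of $\iM\Psi$ is not; with $-1$ only $Q_{\geqslant 0}=\iM\Psi$ must be a submodule (it is), the limit exists, and the pairing is $-\dim C<0$. This matches the paper's choice $\mathbf r=(0,0,-1,0,0,0)$. Likewise, when $\dim M_v\leqslant 1$ you must scale $\mathbb Cv$ and $M_v$ both \emph{upward} (weight $+1$, weight $0$ elsewhere): scaling $M_v$ downward destroys limit existence, since the block $\mathbb Cv\to M_v$ of $\psi$ then scales by a negative power of $t$ while $\psi(v)$ need not vanish; with both at $+1$ the limit exists ($M_v$ is a submodule containing $\psi(v)$) and the pairing is $-2+\dim M_v<0$. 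Finally, in the stable case one must test against one-parameter subgroups nontrivial modulo the trivially acting diagonal (the paper's normalization $r_1=0$ does exactly this); then either some threshold has $V_{\geqslant i}$ a line, contributing $\dim Q_{\geqslant i}-2\geqslant 1$ when $\dim M_v\geqslant 3$, or $\lambda_V$ is scalar, in which case surjectivity forces all $Q$-weights nonnegative and some $\dim Q_{\geqslant i}>0$ term makes the pairing strictly positive. With these corrections your argument goes through in all cases.
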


\begin{proof}
Noting that $E=\Ker \Psi$ is automatically $\mu$-semistable and 
any destabilizing subsheaf $F$ of $E$ corresponds to a one dimensional 
subspace $W\subset V$ by $F^{**}=W\otimes \mathcal O_S$, 
the equivalence of (i) and (ii) is nothing but Lemma 1.1.5 of \cite{OG}.

The equivalence of (ii) and (iii) is just an application of 
Hilbert-Mumford's numerical criterion. We demonstrate the case 
where $\gamma$ is of type $[1^2,2]$, as the proofs in the other cases 
are similar. A reader can skip this part and go promptly to \S 3, for 
the argument here is classical and standard. 

\noindent \stepcounter{subsubsection} (\thesubsubsection)\; 
Let $\gamma =p_1+p_2+2q$ and $\mathcal Q_{\gamma}
\cong \mathcal O_{p_1}\oplus \mathcal O_{p_2}\oplus \mathcal Q_{\gamma, q}$ 
with $\mathcal Q_{\gamma, q}\cong \mathbb C^2$. Then, 
\[
\begin{aligned}
Y_{\gamma} &\cong 
\Hom (V,\mathbb C)\times \Hom (V,\mathbb C)\times 
\Hom (V, \mathbb C^2)\times N_2,\\
G_{\gamma} &=
GL(V)\times \mathbb C^*\times \mathbb C^*\times GL(\mathbb C^2),\\
\chi _{\gamma} &=
(\det {}_V)^{-2}\cdot \id _{\mathbb C^*}\cdot \id _{\mathbb C^*}\cdot (\det {}_{\mathbb C^2}).
\end{aligned}
\]
For a one parameter subgroup (1-PS, in short) $\lambda :\mathbb C^*\to G_{\gamma}$, 
we define the pairing $\langle \chi _{\gamma}, \lambda\rangle =m$ by $\chi _{\gamma}\circ \lambda (t)=t^m$ for 
$t\in \mathbb C^*$. 
According to a form of the numerical criterion (\cite{K}, Proposition 2.5), $\Psi \in Y_{\gamma}$ 
is $(G_{\gamma}, \chi _{\gamma})$-semistable if and only if 
$\langle \chi _{\gamma},\lambda\rangle\geqslant 0$ holds 
for every 1-PS $\lambda$ of $G_{\gamma}$ such that 
the limit $\lim _{t\to 0}\lambda (t)\cdot \Psi$ converges. 
Moreover, a semistable point 
$\Psi$ is stable if and only if the strict inequality $\langle \chi _{\gamma},\lambda\rangle > 0$ holds 
for every \emph{non-trivial} 1-PS $\lambda$ of $G_{\gamma}$ such that 
the limit $\lim _{t\to 0}\lambda (t)\cdot \Psi$ converges. 

\noindent \stepcounter{subsubsection} (\thesubsubsection)\; 
Represent a point $\Psi\in Y_{\gamma}$ by matrices
\[
\Psi=\left( (x_1\;\; x_2) , (y_1\;\; y_2), 
Z=\begin{pmatrix}
z_{11} & z_{12}\\
z_{21} & z_{22}
\end{pmatrix} ; \,
A=\begin{pmatrix}
a_{11} & a_{12}\\
a_{21} & a_{22}
\end{pmatrix} , 
B=\begin{pmatrix}
b_{11} & b_{12}\\
b_{21} & b_{22}
\end{pmatrix} \right)
\]
using bases for $V$ and $\mathcal Q_{\gamma}$. 
For $\mathbf r=(r_1,\cdots ,r_6)\in \mathbb Z^6$, we define a 1-PS
$\lambda _{\mathbf r}:\mathbb C^*\to G_{\gamma}$ by
\[
\lambda _{\mathbf r}(t)
=\left( \diag (t^{r_1},t^{r_2}), t^{r_3}, t^{r_4}, \diag(t^{r_5}, t^{r_6}) \right). 
\]
Then, we have
\begin{multline*}
\lambda _{\mathbf r} (t)\cdot \Psi
=\left( (t^{r_3-r_1}\cdot x_1\quad t^{r_3-r_2}\cdot x_2) , (t^{r_4-r_1}\cdot y_1\quad t^{r_4-r_2}\cdot y_2), 
\right. \\
\begin{pmatrix}
t^{r_5-r_1}\cdot z_{11} & t^{r_5-r_2}\cdot z_{12}\\
t^{r_6-r_1}\cdot z_{21} & t^{r_6-r_2}\cdot z_{22}
\end{pmatrix} ; \\
\left. 
\begin{pmatrix}
a_{11} & t^{r_5-r_6}\cdot a_{12}\\
t^{-r_5+r_6}\cdot a_{21} & a_{22}
\end{pmatrix} , 
\begin{pmatrix}
b_{11} & t^{r_5-r_6}\cdot b_{12}\\
t^{-r_5+r_6}\cdot b_{21} & b_{22}
\end{pmatrix} \right) .
\end{multline*}
Note that any 1-PS $\lambda$ is equivalent to $\lambda _{\mathbf r}$ 
for some $\mathbf r$ up to a base change of $V$ and $\mathcal Q_{\gamma}$, 
and that 
\[
\langle \chi _{\gamma}, \lambda _{\mathbf r}(t)\rangle
= -2r_1-2r_2+r_3+r_4+r_5+r_6. 
\]

\noindent \stepcounter{subsubsection} (\thesubsubsection)\; 
First, we prove (iii)$\Rightarrow$(ii). 
Assume that $\Psi$ is not surjective. This is equivalent to assume either
(a) $(x_1\quad x_2)=0$ or $(y_1\quad y_2)=0$, or (b) $\rank Z\leqslant 1$ and $AZ=BZ=O$. 
In the case (a), say, if we have $(x_1\quad x_2)=0$, $\lambda _{\mathbf r}$ for $\mathbf r=(0,0,-1,0,0,0)$ 
is destabilizing 1-PS, namely, $\lambda _{\mathbf r}(t)\cdot \Psi$ has a limit as $t\to 0$, 
but $\langle \chi _{\gamma},\lambda _{\mathbf r}\rangle =-1<0$. Consider the case (b). If $Z=O$, 
$\lambda _{\mathbf r}$ for $\mathbf r=(0,0,0,0,-1,-1)$ is destabilizing.  If $\rank Z=1$, 
we may assume $Z=\begin{pmatrix} 0 & 0 \\ 1 & 0 \end{pmatrix}$ taking suitable bases for $V$ 
and $\mathcal Q_{\gamma}$. 
$AZ=BZ=O$ implies $a_{12}=b_{12}=0$. Then, 
$\lambda _{\mathbf r}$ for $\mathbf r=(0,0,0,0,-1,0)$ is destabilizing. 
This proves that $\Psi$ is surjective if $\Psi$ is $(G_{\gamma}, \chi _{\gamma})$-semistable. 

\noindent \stepcounter{subsubsection} (\thesubsubsection)\; 
Now we assume that $\dim \Psi(W\otimes \mathcal O_S)<2$
for $W=
\mathbb C\cdot 
\begin{pmatrix}
0 \\ 1
\end{pmatrix}$. This is equivalent to say that
\begin{enumerate}[(a)]
\item $z_{12}=z_{22}=0$ and one of $x_2$ or $y_2$ is zero, or 
\item $x_2=y_2=0$, $\begin{pmatrix} z_{12} \\ z_{22}\end{pmatrix}\neq 0$, and
$A\begin{pmatrix} z_{12} \\ z_{22}\end{pmatrix}=B\begin{pmatrix} z_{12} \\ z_{22}\end{pmatrix}=0$. 
\end{enumerate}
In the case (a), say, $x_2=z_{12}=z_{22}=0$, $\lambda _{\mathbf r}$ for $\mathbf r=(0,1,0,1,0,0)$ is 
destabilizing. In the case (b), we may assume $\begin{pmatrix} z_{12} \\ z_{22}\end{pmatrix}
=\begin{pmatrix} 0 \\ 1\end{pmatrix}$. 
Then, $A\begin{pmatrix} z_{12} \\ z_{22}\end{pmatrix}=B\begin{pmatrix} z_{12} \\ z_{22}\end{pmatrix}=0$ 
implies $a_{12}=b_{12}=0$. Therefore, $\lambda _{\mathbf r}$ for $\mathbf r=(0,1,0,0,0,1)$ is destabilizing. 

\noindent \stepcounter{subsubsection} (\thesubsubsection)\label{strss}\; 
Assume that $\dim \Psi(W\otimes \mathcal O_S)=2$
for $W=
\mathbb C\cdot 
\begin{pmatrix}
0 \\ 1
\end{pmatrix}$, i.e., assume either 
\begin{enumerate}[(a)]
\item $z_{12}=z_{22}=0$, $x_2\neq 0$, and $y_2\neq 0$, or
\item Only one of $x_2$ or $y_2$ is zero, $\begin{pmatrix} z_{12} \\ z_{22}\end{pmatrix}\neq 0$, and
$A\begin{pmatrix} z_{12} \\ z_{22}\end{pmatrix}=B\begin{pmatrix} z_{12} \\ z_{22}\end{pmatrix}=0$, or
\item $x_2=y_2=0$, $\begin{pmatrix} z_{12} \\ z_{22}\end{pmatrix}\neq 0$, and
at least one of $A\begin{pmatrix} z_{12} \\ z_{22}\end{pmatrix}$ or 
$B\begin{pmatrix} z_{12} \\ z_{22}\end{pmatrix}$ is not zero. 
\end{enumerate}
In the case (a), $\lambda _{\mathbf r}$ for $\mathbf r=(0,1,1,1,0,0)$ is ``strictly destabilizing'', i.e., 
$\lambda _{\mathbf r}(t)\cdot \Psi$ has a limit as $t\to 0$, but 
$\langle \chi _{\gamma},\lambda _{\mathbf r}\rangle =0 \leqslant 0$. Similarly, in the case (c), 
$\lambda _{\mathbf r}$ for $\mathbf r=(0,1,0,0,1,1)$ is strictly destabilizing. In the case (b), 
we may assume $x_2=0$ and $\begin{pmatrix} z_{12} \\ z_{22}\end{pmatrix}
=\begin{pmatrix} 0 \\ 1\end{pmatrix}$. Then, as before, we have $a_{12}=b_{12}=0$ and 
$\lambda _{\mathbf r}$ for $\mathbf r=(0,1,0,1,0,1)$ is strictly destabilizing. 
This finishes the proof of (iii)$\Rightarrow$(ii). 

\noindent \stepcounter{subsubsection} (\thesubsubsection)\; 
Now we prove the converse (ii)$\Rightarrow$(iii). Suppose that $\Psi$ is 
not $(G_{\gamma}, \chi_{\gamma})$-semistable (resp. stable). 
Then, up to a change of bases for $V$ and 
$\mathcal Q_{\gamma}$, there exists some non-zero $\mathbf r\in \mathbb Z^6$ such that 
$\lim _{t\to 0} \lambda _{\mathbf r}(t)\cdot \Psi$ exists but 
$\langle \chi _{\gamma}, \lambda _{\mathbf r} \rangle <0$ (resp. $\leqslant 0$). 
Let us call this $\mathbf r$ a destabilizing vector. Note that we may assume 
\begin{equation}\label{cst}
0=r_1 \leqslant r_2, \quad r_3\leqslant r_4, \quad r_5 \leqslant r_6
\end{equation}
by symmetry. Under this condition, $\langle \chi _{\gamma}, \lambda _{\mathbf r} \rangle 
<0$ (resp. $\leqslant 0$) is equivalent to 
\begin{equation}\label{destabvec}
2 r_2>\text{ (resp. $\geqslant$) }r_3+r_4+r_5+r_6. 
\end{equation}

\noindent \stepcounter{subsubsection} (\thesubsubsection)\; 
Assume that $\mathbf r$ with $r_3<0$ is a destabilizing vector for $\Psi$. 
Then, we must have $x_1=x_2=0$, and therefore $\Psi$ cannot be surjective. 
Assume that $\mathbf r$ with $r_5<0$ is a destabilizing vector for $\Psi$.
Then we have $z_{11}=z_{12}=0$. Furthermore, if we even have $r_6<0$, 
$z_{21}$ and $z_{22}$ must vanish, therefore $\Psi$ cannot be surjective. 
So, let us assume $r_6 \geqslant 0$. 
Then we automatically have $r_6>r_5$, so that $a_{12}=b_{12}=0$. 
Noting that $A^2=O=B^2$, we have $a_{11}=a_{22}=b_{11}=b_{22}=0$. Thus, 
we must have $AZ=BZ=O$ and $\Psi$ cannot be surjective. 

\noindent \stepcounter{subsubsection} (\thesubsubsection)\; 
Now let us assume $\Psi$ to be surjective. This implies that $r_3, r_4, r_5, r_6 \geqslant 0$. 
For a destabilizing vector $\mathbf r$, we deduce $r_2>\text{(resp. $\geqslant$) }0$ from \eqref{destabvec}. 
If $r_3, r_4, r_5, r_6<r_2$ for a destabilizing $\mathbf r$, 
we get $x_2=y_2=z_{12}=z_{22}=0$ and 
$\dim \Psi(W\otimes \mathcal O_S)=0<2$
for $W=
\mathbb C\cdot 
\begin{pmatrix}
0 \\ 1
\end{pmatrix}$.

\noindent \stepcounter{subsubsection} (\thesubsubsection)\; 
Suppose $\mathbf r$ destabilizing such that exactly 
three of $r_3, r_4, r_5, r_6$ are less than $r_2$, 
i.e., $r_3, r_4, r_5 < r_2$ or $r_3, r_5, r_6 < r_2$. In the first case, 
we have $x_2=y_2=z_{12}=0$. Moreover $a_{12}=b_{12}=0$, since we must have 
$r_5<r_6$. Therefore, $\dim \Psi(W\otimes \mathcal O_S)=1<2$
for $W=
\mathbb C\cdot 
\begin{pmatrix}
0 \\ 1
\end{pmatrix}$. Also in the second case, we must have $x_2=z_{12}=z_{22}=0$, 
which implies that $\dim \Psi(W\otimes \mathcal O_S)=1<2$
for $W=
\mathbb C\cdot 
\begin{pmatrix}
0 \\ 1
\end{pmatrix}$. 

\noindent \stepcounter{subsubsection} (\thesubsubsection)\; 
Now we assume that $\mathbf r$ destabilizing with exactly two of $r_3, r_4, r_5, r_6$ are
less than $r_2$. Then we must have, say, $r_3, r_4\geqslant r_2$. But under the strict inequality 
\eqref{destabvec}, we get 
\[
2 r_2>r_3+r_4+r_5+r_6\geqslant 2r_2+r_5+r_6, 
\]
which is contradiction since $r_5, r_6\geqslant 0$. This proves (ii)$\Rightarrow$(iii) in the 
semistable case. 

\noindent \stepcounter{subsubsection} (\thesubsubsection)\; 
Finally, assume that $\Psi$ is strictly $(G_{\gamma}, \chi _{\gamma})$-semistable. 
In this case, we can deduce from the argument in the previous paragraph that 
a ``strictly destabilizing'' vector $\mathbf r$ should satisfy that 
two of $r_3, r_4, r_5, r_6$ are equal to $r_2$ and the others are 0, namely 
\begin{enumerate}[(a)]
\item $r_3=r_4=r_2$ and $r_5=r_6=0$, or
\item $r_4=r_6=r_2$ and $r_3=r_5=0$, or
\item $r_5=r_6=r_2$ and $r_3=r_4=0$. 
\end{enumerate}
It is immediate that we have the corresponding statement in (2.3.5) in each case, 
and therefore $\dim \Psi(W\otimes \mathcal O_S)=2$
for $W=
\mathbb C\cdot 
\begin{pmatrix}
0 \\ 1
\end{pmatrix}$. This completes the proof of Proposition \ref{stability}. 
\end{proof}

\section{Calculation of the invariant rings}

In this section, we actually calculate the homogeneous ring of semi-invariants
\begin{equation}\label{homoginvring}
\mathscr R_{\gamma} = \bigoplus _{n=0}^{\infty} A(Y_{\gamma})^{G_{\gamma}, \chi _{\gamma}^n}
\end{equation}
appeared in Theorem \ref{quiverlike} for each partition type $\eta$ and finish the proof of 
Theorem \ref{main}. 

\subsection{} First, we treat the case where $\gamma$ is of type $[1^4]$. We include this case 
because this is the toy example for the calculation of all the remaining cases. In this case, 
\[
\begin{aligned}
Y_{\gamma} &\cong 
\prod _{i=1}^4 \Hom (V,\mathbb C),\\
G_{\gamma} &=GL(V)\times (\mathbb C^*)^4,\\
\chi _{\gamma} &=
(\det {}_V)^{-2}\cdot \id _{\mathbb C^*}\cdot \id _{\mathbb C^*}\cdot \id _{\mathbb C^*}\cdot \id _{\mathbb C^*}, 
\end{aligned}
\]
and the space $A(Y_{\gamma})^{G_{\gamma}, \chi _{\gamma}^n}$ consists of 
$SL(V)$-invariants whose character is $\chi _{\gamma}^n$. If we write $\Psi\in Y_{\gamma}$ as
\[
\Psi = 
\begin{pmatrix}
x_1 & x_2\\
y_1 & y_2\\
z_1 & z_2\\
w_1 & w_2
\end{pmatrix}
\]
using a coordinate, the ring of $SL(V)$-invariants is generated by the Pl\"ucker coordinates, 
namely, the $(2\times2)$-subdeterminants
\begin{multline*}
p_{12}=\begin{vmatrix} x_1 & x_2 \\ y_1 & y_2\end{vmatrix},\; 
p_{13}=\begin{vmatrix} x_1 & x_2 \\ z_1 & z_2\end{vmatrix},\;
p_{14}=\begin{vmatrix} x_1 & x_2 \\ w_1 & w_2\end{vmatrix},\;\\
p_{23}=\begin{vmatrix} y_1 & y_2 \\ z_1 & z_2\end{vmatrix},\; 
p_{24}=\begin{vmatrix} y_1 & y_2 \\ w_1 & w_2\end{vmatrix},\; 
p_{34}=\begin{vmatrix} z_1 & z_2 \\ w_1 & w_2\end{vmatrix},
\end{multline*}
subject to the Pl\"ucker relation
\[
p_{12}p_{34}-p_{13}p_{24}+p_{14}p_{23}=0, 
\]
which is the only relation between the Pl\"ucker coordinates. If we calculate the weights for these $SL(V)$-invariants, 
we get
\begin{center}
\begin{tabular}{c||c|c|c|c|c}
   & $\det {}_V$ & $\mathbb C^*$ & $\mathbb C^*$ & $\mathbb C^*$ & $\mathbb C^*$ \\
\hline
$p_{12}$ & -1 & 1 & 1 & 0 & 0\\
$p_{13}$ & -1 & 1 & 0 & 1 & 0\\
$p_{14}$ & -1 & 1 & 0 & 0 & 1\\
$p_{23}$ & -1 & 0 & 1 & 1 & 0\\
$p_{24}$ & -1 & 0 & 1 & 0 & 1\\
$p_{34}$ & -1 & 0 & 0 & 1 & 1\\ \hline
$\chi_{\gamma}$ & -2 & 1 & 1 & 1 & 1
\end{tabular}
\end{center}
Therefore, the space $A(Y_{\gamma})^{G_{\gamma},\chi_{\gamma}}$ is spanned by
\[
u_0=p_{12}p_{34},\quad u_1=p_{13}p_{24},\quad  u_2=p_{14}p_{23}
\]
and we have a description of the homogenous ring $\mathscr R_{\gamma}$ \eqref{homoginvring} as
\[
\mathscr R_{\gamma}=\bigoplus _{n=0}^{\infty} A(Y_{\gamma})^{G_{\gamma}, \chi _{\gamma}^n}
\cong \mathbb C[u_0,u_1,u_2] / (u_0-u_1+u_2).
\]
Applying Theorem \ref{quiverlike}, 
this leads to the conclusion $B_{\gamma}\cong \Proj \mathscr R_{\gamma}\cong \mathbb P^1$. 
Moreover, a point $\Psi$ is strictly semistable if and only if one of the Pl\"ucker coordinates vanishes 
(Proposition \ref{stability}). Therefore, we get
\[
(B_{\gamma}\cap \Sigma)_{red}
\cong (u_0-u_1+u_2=0, \; u_0u_1u_2=0) = \{\text{ 3 points }\} \subset \mathbb P^2.
\]

\subsection{}
Let $\gamma$ be of type $[1^2, 2]$ and $W=\mathbb C^2$. Then, 
\[
\begin{aligned}
Y_{\gamma} &\cong 
\Hom (V,\mathbb C)\times \Hom (V,\mathbb C)\times 
\Hom (V, W)\times N_2,\\
G_{\gamma} &=
GL(V)\times \mathbb C^*\times \mathbb C^*\times GL(W),\\
\chi _{\gamma} &=
(\det {}_V)^{-2}\cdot \id _{\mathbb C^*}\cdot \id _{\mathbb C^*}\cdot (\det {}_W).
\end{aligned}
\]
Just as in \S 3.1, first we calculate the invariant ring $A(Y_{\gamma})^{SL(V)\times SL(W)}$, and then 
pick up the homogeneous subring of elements with characters $\chi _{\gamma}^n,\; (n\geqslant 0)$.  
Let us write $\Psi=(\psi,\alpha)\in Y_{\gamma}=\Hom (V, \Gamma (\mathcal Q_{\gamma}))\times N_{\gamma}$ as 
\[
\Psi=\Big( \psi = \left(\begin{array}{@{\,}cc@{\,}}
				x_1 & x_2 \\ \hline
				y_1 & y_2 \\ \hline
				z_{11} & z_{12}\\
				z_{21} & z_{22}
				\end{array}\right), \alpha = (A,B) \Big), \quad (\, (A,B)\in N_2)
\]
using a coordinate. 
As $GL(V)$ acts trivially on $N_2$, 
the generators  for $SL(V)$-invariant ring $A(Y_{\gamma})^{SL(V)}$ is always given by Pl\"ucker 
for the coordinates of $\psi$, plus the entries of $A$ and $B$. Let 
\[
f_1 =\begin{vmatrix} x_1 & x_2 \\ y_1 & y_2\end{vmatrix},\;\,
f_2 =\begin{vmatrix} z_{11} & z_{12} \\ z_{21} & z_{22}\end{vmatrix},\;\, 
w_1 =\begin{pmatrix}
	\begin{vmatrix} x_1 & x_2 \\ z_{11} & z_{12}\end{vmatrix} \vspace{6pt}\\
	\begin{vmatrix} x_1 & x_2 \\ z_{21} & z_{22}\end{vmatrix}\end{pmatrix},\;\, 
w_2 =\begin{pmatrix}
	\begin{vmatrix} y_1 & y_2 \\ z_{11} & z_{12}\end{vmatrix} \vspace{6pt}\\
	\begin{vmatrix} y_1 & y_2 \\ z_{21} & z_{22}\end{vmatrix}\end{pmatrix}. 
\]
$SL(W)$ acts trivially on $f_1$ and $f_2$, and by left multiplication on $w_1$ and $w_2$. 
Here, the situation came out is exactly the problem of invariants for mixed tensors. 
Classical invariant theory had a method to deal with this kind of problem, the \emph{symbolic method}
(\cite{PV}, Theorem 9.3). The statement relevant to the current situation is the following:

\begin{quote}
Let $W$ be a vector space and $U=W^{\oplus 2}\oplus \End (W)^{\oplus 2}$. Let $SL(W)$ act on $U$ 
by left multiplication on the factors $W$ and by conjugation on the factors $\End (W)$. Represent an element 
of $Y$ by $(w_1,w_2;A,B)$. Then, the ring of $SL(W)$ invariants $A(U)^{SL(W)}$ is generated by 
\begin{itemize}
\item $\tr (W_0(A,B))$, and
\item $\det (W_1(A,B)w_i \mid W_2(A,B)w_j)\quad (i,j\in \{1,2\})$, 
\end{itemize}
where $W_i(A,B)\; (i=0,1,2)$ stands for an arbitrary word in $A$ and $B$.
\end{quote}

\noindent
This claim is not strong enough in general, because it gives only infinite set of generators. 
The symbolic method say nothing about a bound on the generating set, whereas it should be 
actually finite by the famous theorem of Hilbert. However, in our situation, the endomorphisms $A$ and $B$ are
\emph{nilpotent} and \emph{commute each other}, so that the set of non-zero words in $A$ and $B$ is effectively bounded, 
and so is the generating set of invariants. More concretely, every $SL(V)\times SL(W)$-invariants in our case is a polynomial in 
\[
\begin{aligned}
\xi_1&=f_1,\quad \xi_2=f_2,\\
\xi_3&= \det (Aw_1\mid w_1),\; \xi_4=\det (Bw_1 \mid w_1),\\
\xi_5&= \det (Aw_1\mid w_2),\; \xi_6=\det (Bw_1 \mid w_2),\\
\xi_7&= \det (Aw_2\mid w_2),\; \xi_8=\det (Bw_2 \mid w_2) 
\end{aligned}
\]
(Note that $\det (w_1\mid w_2)=f_1\cdot f_2$).
The weights of these invariants with respect to the characters are given by the following table.
\begin{center}
\begin{tabular}{c||c|c|c|c}
 & $\det {}_V$ & $\mathbb C^*\; (y)$ & $\mathbb C^*\; (z)$ & $\det {}_W$ \\ \hline
 $\xi _1$ & -1 & 0 & 0 & 1\\
 $\xi _2$ & -1 & 1 & 1 & 0\\
 $\xi _3$ & -2 & 2 & 0 & 1\\
 $\xi _4$ & -2 & 2 & 0 & 1\\
 $\xi _5$ & -2 & 1 & 1 & 1\\
 $\xi _6$ & -2 & 1 & 1 & 1\\
 $\xi _7$ & -2 & 0 & 2 & 1\\
 $\xi _8$ & -2 & 0 & 2 & 1\\ \hline
 $\chi_{\gamma}$ & -2 & 1 & 1 & 1
\end{tabular}
\end{center}
Therefore, the homogeneous invariant ring 
$\mathscr R_{\gamma}=\bigoplus _{n\geq 0} A(Y_{\gamma})^{G_{\gamma}, \chi _{\gamma}^n}$ is 
generated by the elements of degree 1
\[
u_0=\xi_1\xi_2,\quad u_1=\xi _5,\quad u_2=\xi_6
\]
and the elements of degree 2
\[
\xi _3\xi _7,\; \xi _3\xi _8,\; \xi _4\xi _7,\; \xi _4\xi _8.
\]
Now we have to decide the relations between these generators. As we have a ring homomorphism
\[
\mathbb C[\xi_1, \dots, \xi_8] \twoheadrightarrow A(Y_{\gamma})^{SL(V)\times SL(Q)}
\hookrightarrow A(Y_{\gamma}), 
\]
the kernel of this morphism is the module of relations between $\xi _i$'s. As we are in 
a completely concrete situation, we can calculate this ideal using the elimination property 
of Gr\"obner basis implemented on computer algebra system. The author used {\sc Singular} \cite{S} 
to calculate the ideal. The result is that the relations are generated by
\begin{gather}
\xi_4\xi_5-\xi_3\xi_6,\; \xi_6\xi_7-\xi_5\xi_8,\notag\\
\xi_5\xi_6-\xi_3\xi_8,\; \xi_4\xi_7-\xi_3\xi_8,\label{rel:1112}\\
\xi_5^2 - \xi_3\xi_7,\; \xi_6^2-\xi_4\xi_8. \notag
\end{gather}
This immediately implies that we have an isomorphism of homogeneous rings
\[
\mathscr R_{\gamma}\cong \mathbb C[u_0,u_1,u_2], 
\]
therefore, $B_\gamma\cong \mathbb P^2$. 

For every point $\Psi=(\psi, (A,B))$, we have a point in the $GL(W)$-orbit of $\Psi$ such that 
\[
A=\begin{pmatrix} 0& 0 \\ a & 0\end{pmatrix},\quad 
B=\begin{pmatrix} 0& 0 \\ b & 0\end{pmatrix}, 
\]
for $A$ and $B$ commute. At such a point, $u_0$, $u_1$, and $u_2$ can be written as 
\[
\begin{aligned}
u_0 &=\begin{vmatrix} x_1 & x_2 \\ y_1 & y_2\end{vmatrix} \cdot
\begin{vmatrix} z_{11} & z_{12} \\ z_{21} & z_{22}\end{vmatrix},\\
u_1&=a\cdot\begin{vmatrix} x_1 & x_2 \\ z_{11} & z_{12}\end{vmatrix}\cdot
\begin{vmatrix} y_1 & y_2 \\ z_{11} & z_{12}\end{vmatrix},\quad
u_2=b\cdot\begin{vmatrix} x_1 & x_2 \\ z_{11} & z_{12}\end{vmatrix}\cdot
\begin{vmatrix} y_1 & y_2 \\ z_{11} & z_{12}\end{vmatrix}. 
\end{aligned}
\]
It immediately follows from this expression 
that the points in the orbit correspond to a strictly semistable sheaf 
exactly if $u_0=0$ or $u_1=u_2=0$ (cf. Proposition \ref{stability} (ii)). Therefore, 
\[
(B_{\gamma}\cap \Sigma)_{red}=(u_0=0)\cup (u_1=u_2=0)\subset \mathbb P^2 [u_0:u_1:u_2], 
\]
which is a disjoint union of a line and a point on $\mathbb P^2$. 

\subsection{}
Let $\gamma$ be of type $[2^2]$ and $W_1=\mathbb C^2$, $W_2=\mathbb C^2$. 
Our GIT setting in this case is
\[
\begin{aligned}
Y_{\gamma} &\cong \Hom (V,W_1)\times \Hom (V,W_2)\times N_2\times N_2,\\
G_{\gamma} & = GL(V)\times GL(W_1)\times GL(W_2),\\
\chi _{\gamma} &=(\det {}_V)^{-2}\cdot \det {}_{W_1}\cdot \det {}_{W_2}.
\end{aligned}
\]
Represent $\Psi\in Y_{\gamma}$ as
\[
\Psi=\Big( \psi = \left(\begin{array}{@{\,}cc@{\,}}
				x_{11} & x_{12} \\ 
				x_{21} & y_{22} \\ \hline
				y_{11} & y_{12}\\
				y_{21} & y_{22}
				\end{array}\right), \alpha = (A_1,B_1; A_2,B_2) \Big)
\]
by a coordinate. Then, $SL(V)$-invariants are generated by 
the entries of $A_1$, $B_1$, $A_2$, $B_2$, and
\[
f_1=\begin{vmatrix} x_{11} & x_{12} \\ x_{21} & x_{22} \end{vmatrix}, \;
f_2=\begin{vmatrix} y_{11} & y_{12} \\ y_{21} & y_{22} \end{vmatrix}, \;
w_1 =\begin{pmatrix}
	\begin{vmatrix} x_{11} & x_{12} \\ y_{11} & y_{12}\end{vmatrix} \vspace{6pt}\\
	\begin{vmatrix} x_{21} & x_{22} \\ y_{11} & z_{12}\end{vmatrix}\end{pmatrix},\;
w_2 =\begin{pmatrix}
	\begin{vmatrix} x_{11} & x_{12} \\ y_{21} & y_{22}\end{vmatrix} \vspace{6pt}\\
	\begin{vmatrix} x_{21} & x_{22} \\ y_{21} & z_{22}\end{vmatrix}\end{pmatrix}. 
\]
$SL(W_1)$ acts trivially on $f_1,f_2,A_2,B_2$, by left multiplication on $w_1, w_2$, and
by adjoint on $A_1,B_1$. Therefore, every $SL(V)\times SL(Q)$-invariant is a polynomial of
$\xi _1=f_1$, $\xi_2=f_2$, the entries of $A_2$, $B_2$, and
\[
\begin{aligned}
\xi _3&=\det (A_1w_1\mid w_1), & \xi_4&=\det (B_1w_1\mid w_1),\\
\xi _5&=\det (A_1 w_1\mid w_2), & \xi_6&=\det (B_1w_1\mid w_2),\\
\xi_7&=\det (A_1 w_2\mid w_2), & \xi_8&=\det (B_1w_2\mid w_2).
\end{aligned}
\]
It is straightforward to check that $GL(W_2)$ acts on the vectors
\[
\begin{pmatrix} \xi _3 \\ \xi _5 \\ \xi _7 \end{pmatrix},\quad 
\begin{pmatrix} \xi _4 \\ \xi _6 \\ \xi _8 \end{pmatrix}
\]
via $GL (\Sym ^2 W_2)$. Furthermore, we have an isomorphism of $\mathfrak{sl}(W_2)$-representations
\[
T:\Sym ^2 W_2\to \mathfrak{sl}(W_2), 
\]
under which 
\[
T\begin{pmatrix} \xi _3 \\ \xi _5 \\ \xi _7 \end{pmatrix}=\begin{pmatrix} \xi _5 & -\xi _3 \\ \xi _7 & -\xi _5\end{pmatrix}=: X_1,\quad
T\begin{pmatrix} \xi _4 \\ \xi _6 \\ \xi _8 \end{pmatrix}=\begin{pmatrix} \xi _6 & -\xi _4 \\ \xi _8 & -\xi _6\end{pmatrix}=: X_2.
\]
Therefore, a generating set of $SL(V)\times SL(W_1)\times SL(W_2)$-invariants 
is given by the matrix invariants for $A_2,B_2,X_1,X_2$, 
together with $f_1$ and $f_2$. The symbolic method tells us that the generating set of matrix invariants are given by 
traces of words in $A_2,B_2,X_1$, and $X_2$. Note the following properties:
\begin{itemize}
\item We can sort the word in trace, i.e., $\tr(XY)=\tr(YX)$.
\item We have $A_2^2=A_2B_2=B_2^2=O$. 
\item $X_1^2=(\xi _5^2-\xi _3\xi _7)\cdot I_2=O$ (see the relations \eqref{rel:1112}). Similarly, we have $X_1X_2=X_2^2=O$. 
\end{itemize}
Hence, the ring of $SL(V)\times SL(W_1)\times SL(W_2)$-invariants is generated by  
\[
\begin{aligned}
\zeta _1&=\xi_1=f_1, \quad \zeta _2=\xi _2=f_2,\\
\zeta _3&=\tr (A_2X_1), \quad \zeta _4=\tr (B_2X_1), \quad \zeta _5=\tr (A_2X_2), \quad \zeta _6=\tr (B_2X_2),
\end{aligned}
\]
and the table of the weights is as the following. 
\begin{center}
\begin{tabular}{c||c|c|c}
 & $\det {}_V$ & $\det {}_{W_1}$ & $\det {}_{W_2}$ \\ \hline
 $\zeta _1$ & -1 & 1 & 0\\
 $\zeta _2$ & -1 & 0 & 1\\
 $\zeta _3$ & -2 & 1 & 1\\
 $\zeta _4$ & -2 & 1 & 1\\
 $\zeta _5$ & -2 & 1 & 1\\
 $\zeta _6$ & -2 & 1 & 1\\ \hline
 $\chi_{\gamma}$ & -2 & 1 & 1 
\end{tabular}
\end{center}
We have an obvious relation $\zeta_4\zeta _5-\zeta _3\zeta _6=0$ and actually one can easily check that 
this is the only relation between $\zeta_i$'s. Summarizing, we get an isomorphism of graded rings
\[
\mathscr R_{\gamma}\cong \mathbb C [u_0, u_1, u_2, u_3, u_4] / (u_1u_4-u_2u_3), 
\]
where
\[
u_0=\zeta _1\zeta_2, \; u_1=\zeta _3,\; u_2=\zeta _4,\; u_3=\zeta _5,\; u_4=\zeta _6.
\]
Namely, $B_{\gamma}$ is isomorphic to a quadric cone in $\mathbb P^4$. 

At a point $\Psi=(\psi,(A_1,B_1; A_2,B_2))\in Y_{\gamma}$ such that 
\[
A_1=\begin{pmatrix}0 & 0 \\ a_1 & 0\end{pmatrix},\; 
B_1=\begin{pmatrix}0 & 0 \\ b_1 & 0\end{pmatrix},\; 
A_2=\begin{pmatrix}0 & 0 \\ a_2 & 0\end{pmatrix},\; 
B_2=\begin{pmatrix}0 & 0 \\ b_2 & 0\end{pmatrix},\; 
\]
$u_0, \cdots ,u_4$ are written as 
\[
\begin{aligned}
u _0 &=\begin{vmatrix} x_{11} & x_{12} \\ x_{21} & x_{22} \end{vmatrix} \cdot 
\begin{vmatrix} y_{11} & y_{12} \\ y_{21} & y_{22} \end{vmatrix}, \\
u_1  &= a_1a_2 D^2, \; 
u_2 = a_1b_2 D^2, \; 
u_3 = b_1a_2 D^2, \; 
u_4 = b_1b_2 D^2, 
\end{aligned}
\]
where $D=\begin{vmatrix} x_{11} & x_{12} \\ y_{11} & x_{12} \end{vmatrix}$. Noting that
\[
u_1=u_2=u_3=u_4=0 \Leftrightarrow 
D=0 \text{, or } a_1=b_1=0 \text{, or } a_2=b_2=0, 
\]
it is easy to see (cf. Proposition \ref{stability}, (ii)) that
\[
(B_{\gamma}\cap \Sigma)_{red}=(u_0=0)\cup (u_1=u_2=u_3=u_4=0)\subset B_{\gamma}
\subset \mathbb P^4[u_0:\cdots u_4], 
\]
namely, $(B_{\gamma}\cap \Sigma)_{red}$ is a disjoint union of a smooth hyperplane section and the vertex of $B_{\gamma}$. 

\subsection{} 
Let $\gamma$ be of type $[1,3]$ and $W=\mathbb C^3$. 
Then, we have 
\[
\begin{aligned}
Y_{\gamma} &\cong \Hom (V,\mathbb C)\times \Hom (V,W)\times N_3, \\
G_{\gamma} & = GL(V)\times \mathbb C^*\times GL(W),\\
\chi _{\gamma} &=(\det {}_V)^{-2}\cdot \id _{\mathbb C^*} \cdot \det {}_{W}.
\end{aligned}
\]
Represent $\Psi\in Y_{\gamma}$ as
\[
\Psi=\Big( \psi = \left(\begin{array}{@{\,}cc@{\,}}
				x_1 & x_2 \\ \hline 
				y_{11} & y_{12} \\ 
				y_{21} & y_{22}\\
				y_{31} & y_{32}
				\end{array}\right), \alpha = (A,B) \Big)
\]
using a coordinate. The ring of $SL(V)$-invariants is generated by the entries of $A$ and $B$, and
the Pl\"ucker coordinates
\[
\begin{aligned}
f_1 &=\begin{vmatrix}x_1 &x _2\\ y_{11} & y_{12}\end{vmatrix}, 
& f_2 &=\begin{vmatrix}x_1 &x _2\\ y_{21} & y_{22}\end{vmatrix}, 
& f_3 &=\begin{vmatrix}x_1 &x _2\\ y_{31} & y_{32}\end{vmatrix},\\
g_1 &=\begin{vmatrix}y_{11} &y _{12} \\ y_{21} & y_{22}\end{vmatrix}, 
& g_2 &=\begin{vmatrix}y_{11} &y _{12} \\ y_{31} & y_{32}\end{vmatrix}, 
& g_3 &=\begin{vmatrix}y_{21} &y _{22} \\ y_{31} & y_{32}\end{vmatrix}. 
\end{aligned}
\]
If we put them into a vector and an alternating form
\[
w=\begin{pmatrix} f_1 \\ f_2 \\ f_3\end{pmatrix}, \quad 
\omega = g_1 (e_1\wedge e_2)+g_2 (e_1 \wedge e_3)+g_3 (e_2\wedge e_3), 
\]
it is easy to verify that $SL(W)$ acts on the first vector by left multiplication and on the second form 
via $GL(\wedge ^2 W)$ by the induced representation. We have an isomorphism of 
$GL(W)$ representations 
\[
T: \wedge ^2 W\to W^{\vee}=\Hom (W, \mathbb C),
\]
under which 
\[
T(\omega) = \begin{pmatrix}g_3 & -g_2 & g_1\end{pmatrix}=:\lambda. 
\]
According to the symbolic method, the ring of mixed $SL(W)$-invariants of a vector, a co-vector, and matrices 
is given by
\begin{enumerate}[(a)]
\item $\lambda W_0(A,B) w$
\item $\tr (W_0(A,B))$
\item $\det (W_1(A, B)w \mid W_2(A, B)w\mid W_3(A,B)w)$ and
$\det  \left(\begin{array}{@{\,}cc@{\,}} \lambda W_1(A,B) \\ \hline \lambda W_2(A,B) \\ \hline \lambda W_3(A,B)
\end{array}\right)$, 
\end{enumerate}
where $w\in W$, $\lambda \in W^{\vee}$, $A,B\in \End (W)$, and 
$W_i(A,B)\; (i=0,1,2,3)$ stands for any word in $A$ and $B$. Note that we have 
the commutativity and the nilpotency conditions
\[
AB-BA=O, \; A^3=A^2B=AB^2=B^3=O.
\]
It is immediate that the invariants given by traces of words in $A$ and $B$ as in (b) vanish. 
For the type (a), the non-trivial invariants are 
\[
\begin{aligned}
\xi _1& =\lambda A w, & \xi _2 &=\lambda B w,\\
\xi _3 &=\lambda A^2 w, & \xi _4 &= \lambda AB w, & \xi _ 5 &= \lambda B^2 w
\end{aligned}
\]
(note that $\lambda w=0$, which is nothing but the Pl\"ucker relation). For the type (c), 
we have quite a few candidates. Note that $\det (W_1(A, B)w \mid W_2(A, B)w\mid W_3(A,B)w)=0$ 
if all of $W_i(A,B)$ are non-trivial, because $A$ and $B$ are nilpotent and commute each other. 
Similarly, a rank consideration of the endomorphisms $A^iB^j$ shows that 
the determinants of type (c) other than
\[
\begin{aligned}
\upsilon _1 & =\det (w\mid Aw\mid Bw),\\
\upsilon _2 & =\det (w\mid Aw\mid A^2w), &\upsilon _3&=\det (w\mid Aw\mid ABw), 
& \upsilon _4 &=\det (w\mid Aw\mid B^2w),\\
\upsilon _5 & =\det (w\mid Bw\mid A^2w), &\upsilon _6&=\det (w\mid Bw\mid ABw), 
& \upsilon _7 &=\det (w\mid Aw\mid B^2w)\\
\end{aligned}
\]
must vanish. We also have seven (possibly) non-trivial invariants 
$\zeta _1, \dots ,\zeta _7$ of type (c) involving $\lambda$ in the same way. 
These 19 invariants in total generate the invariant ring $A(Y_{\gamma})^{SL(V)\times SL(W)}$. 
The table of the weights of the invariants 
is the following. 
\begin{center}
\begin{tabular}{c||c|c|c c}
 & $\det {}_V$ & $ \id _{\mathbb C^*}$ & $\det {}_W$ \\ \hline
 $\xi _i$ & -2 & 1 & 1& $(i=1, \cdots, 5)$\\
 $\upsilon _i$ & -3 & 1 & 3 & $(i=1, \cdots, 7)$\\
 $\zeta _i$ & -3 & 2 & 0 & $(i=1, \cdots, 7)$\\ \hline
 $\chi_{\gamma}$ & -2 & 1 & 1 
\end{tabular}
\end{center}
Therefore, the homogeneous invariant ring $\mathscr R _{\gamma}$ is generated by
\[
\xi _i\;(i=1,\cdots 5)\quad \text{and}\quad \upsilon _i\cdot \zeta _j\; (i,j=1,\cdots ,7).
\]
Now we look at the relations among them. A Gr\"obner basis calculation tells us that 
all the invariants of the form $\upsilon _i\cdot \zeta _j$ are contained in the homogeneous subring 
generated by $\xi _1, \cdots, \xi _5$ and the only remaining relation is rather obvious
\[
\xi _4^2-\xi _3\xi _5=0. 
\]
This is equivalent to say that we have an isomorphism of homogenous rings
\[
\mathscr R_{\gamma}\cong \mathbb C[\xi _1, \cdots ,\xi _5] / (\xi _4^2-\xi _3\xi _5). 
\]
In other words, $B_{\gamma}$ is isomorphic to a rank 2 quadric in $\mathbb P^4$. 

\begin{claim}
A semistable point $\Psi\in Y_{\gamma}$ 
corresponds to a strictly semistable sheaf if and only if 
$\xi_3=\xi_4=\xi_5=0$
\end{claim}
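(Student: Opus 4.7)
The plan is to use Proposition \ref{stability} to translate strict semistability of the associated sheaf into the existence of a one-dimensional $W\subset V$ with $\dim\Psi(W\otimes\mathcal{O}_S) = 2$, and then rephrase this in terms of the cyclic $\mathcal{O}_S$-module generated by $\psi(w)|_q$ in $\mathcal{Q}_{\gamma,q}\cong\mathbb{C}^3$. Splitting the length across the supports $p$ and $q$, strict semistability reduces to one of two alternatives: either (a) $w$ spans $\ker(x_1,x_2)$, so that $\psi(w)|_q = \pm w^{\mathrm{vec}}$ where $w^{\mathrm{vec}} = (f_1,f_2,f_3)^T$, and $\dim\langle w^{\mathrm{vec}}\rangle_{A,B} = 2$; or (b) there exists $v_0\in\iM Y\cap\ker A\cap\ker B$ not proportional to $w^{\mathrm{vec}}$. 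A preliminary step shows that semistability already forces the $3\times 2$ matrix $Y$ of the $q$-component of $\psi$ to have rank $2$ (otherwise some $w$ would produce $Yw=0$ while $(x_1,x_2)w\neq 0$, giving $\dim\Psi(W\otimes\mathcal O_S)<2$), so $\lambda\neq 0$, $\ker\lambda = \iM Y$, and $w^{\mathrm{vec}}\in\iM Y$ by the Pl\"ucker relation.

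The key structural observation is that two commuting nilpotent endomorphisms of a two-dimensional vector space satisfy $X^2 = XY = Y^2 = 0$: both share the same one-dimensional kernel, which contains the image of each. For the forward direction, in case (a) this applied to $A, B$ restricted to the two-dimensional cyclic module gives $A^2 w^{\mathrm{vec}} = AB w^{\mathrm{vec}} = B^2 w^{\mathrm{vec}} = 0$, hence $\xi_3 = \xi_4 = \xi_5 = 0$. In case (b), the line $\mathbb{C}v_0$ is $(A,B)$-invariant, so $A$ and $B$ descend to commuting nilpotents on the two-dimensional quotient $W/\mathbb{C}v_0$; the same observation forces $A^2 w^{\mathrm{vec}}, AB w^{\mathrm{vec}}, B^2 w^{\mathrm{vec}}\in\mathbb{C}v_0\subset\iM Y = \ker\lambda$, again giving the vanishing.

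For the converse, suppose $\xi_3=\xi_4=\xi_5 = 0$, so that $U := \langle A^2 w^{\mathrm{vec}}, AB w^{\mathrm{vec}}, B^2 w^{\mathrm{vec}}\rangle\subset\iM Y$; the nilpotency relations $A^3 = A^2B = AB^2 = B^3 = 0$ give $U\subset\ker A\cap\ker B$. If also $\xi_1=\xi_2=0$, the entire cyclic module of $w^{\mathrm{vec}}$ lies in $\iM Y$, so has dimension at most two; semistability excludes dimension one, and case (a) holds. Otherwise, say $\xi_1\neq 0$, so $Aw^{\mathrm{vec}}\notin\iM Y$ and in particular $Aw^{\mathrm{vec}}\neq 0$. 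If $U$ is not contained in $\mathbb{C}w^{\mathrm{vec}}$ we pick $v_0\in U\setminus\mathbb{C}w^{\mathrm{vec}}$ and obtain case (b) directly. If instead $U\subset\mathbb{C}w^{\mathrm{vec}}$, writing $A^2 w^{\mathrm{vec}} = cw^{\mathrm{vec}}$ and applying $A$ gives $cAw^{\mathrm{vec}} = A^3 w^{\mathrm{vec}} = 0$, forcing $c=0$; the same trick forces $AB w^{\mathrm{vec}} = B^2 w^{\mathrm{vec}} = 0$. Then $Aw^{\mathrm{vec}}$ and $Bw^{\mathrm{vec}}$ both lie in $\ker A\cap\ker B$, whose dimension is at least two; since $\iM Y$ is two-dimensional in $W=\mathbb{C}^3$, the intersection $\iM Y\cap\ker A\cap\ker B$ is nontrivial, and since $w^{\mathrm{vec}}\notin\ker A$ any non-zero vector of this intersection provides the required $v_0$. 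The main obstacle is precisely the final bookkeeping in this converse: one must carefully use the nilpotency relations to handle the degenerate situation where $U$ collapses onto $\mathbb{C}w^{\mathrm{vec}}$ and still produce a fixed vector of $(A,B)$ in $\iM Y$ not proportional to $w^{\mathrm{vec}}$.
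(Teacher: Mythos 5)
Your reduction of strict semistability to the two alternatives (a) $\dim\langle w^{\mathrm{vec}}\rangle_{A,B}=2$ or (b) the existence of $v_0\in \iM Y\cap\ker A\cap\ker B$ with $v_0\notin\mathbb{C}w^{\mathrm{vec}}$ is correct (semistability does force $\rank Y=2$, and $Y$ injective then gives the equivalence), and it is a genuinely different route from the paper, which instead normalizes $(A,B)$ to strictly lower-triangular form by the $GL(W)$-action, evaluates $\xi_3=a_1a_3f_1g_1$, $\xi_4=a_1b_3f_1g_1$, $\xi_5=b_1b_3f_1g_1$ in those coordinates, and case-checks by exhibiting explicit vectors $v$. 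Your forward direction and the first two subcases of the converse are sound. But the last step of your converse contains a genuine error: the assertion that $\ker A\cap\ker B$ ``has dimension at least two'' because it contains $Aw^{\mathrm{vec}}$ and $Bw^{\mathrm{vec}}$. That conclusion requires these two vectors to be linearly independent, which you never establish, and it is false in general: take $A$ the regular nilpotent with $Ae_1=e_2$, $Ae_2=e_3$, $Ae_3=0$, take $B=\alpha A$ with $\alpha\neq 0$ (one checks $(A,B)\in N_3$), and take $\psi$ with $(x_1,x_2)=(1,0)$ and $Y=(e_1\mid e_2)$. Then $w^{\mathrm{vec}}=e_2$, the point is semistable, $\xi_3=\xi_4=\xi_5=0$, $\xi_1=\lambda e_3\neq 0$, and $U=0\subset\mathbb{C}w^{\mathrm{vec}}$, so you are exactly in this subcase; yet $\ker A\cap\ker B=\mathbb{C}e_3$ is one-dimensional and meets $\iM Y=\langle e_1,e_2\rangle$ only in $0$, so no $v_0$ of the required kind exists and your construction fails on an actual point of $Y_\gamma$.

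The claim itself survives at such points because alternative (a) holds there: $\langle w^{\mathrm{vec}}\rangle_{A,B}=\langle e_2,e_3\rangle$ is two-dimensional. Accordingly the gap is patchable by inserting a dichotomy after you derive $U=0$: if $Aw^{\mathrm{vec}}$ and $Bw^{\mathrm{vec}}$ are linearly dependent, the cyclic module equals $\langle w^{\mathrm{vec}},Aw^{\mathrm{vec}}\rangle$, which is two-dimensional since $Aw^{\mathrm{vec}}\neq 0$ and $A$ is nilpotent (an eigenvector relation $Aw^{\mathrm{vec}}=cw^{\mathrm{vec}}$ forces $c=0$), so (a) applies; if they are independent, then $w^{\mathrm{vec}},Aw^{\mathrm{vec}},Bw^{\mathrm{vec}}$ span $W$ (as $w^{\mathrm{vec}}\notin\ker A$ while the other two lie in it), $\ker A\cap\ker B\supseteq\langle Aw^{\mathrm{vec}},Bw^{\mathrm{vec}}\rangle$ really is two-dimensional, and your intersection argument with the two-plane $\iM Y$ inside $\mathbb{C}^3$ produces $v_0$ as you intended. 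With that one repair your proof is complete, coordinate-free, and arguably more illuminating than the paper's normal-form computation, since it explains structurally why only words of length two in $A,B$ enter the boundary conditions.
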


\begin{proof}
Passing to a point in the $GL(W)$-orbit, we may assume that $\Psi=(\psi, (A,B))\in Y_{\gamma}$ 
is of the form  
\[
A=\begin{pmatrix}
0 & 0 & 0 \\
a_1 & 0 & 0\\
a_2 & a_3 & 0 \end{pmatrix},\quad 
B=\begin{pmatrix}
0 & 0 & 0 \\
b_1 & 0 & 0\\
b_2 & b_3 & 0 \end{pmatrix}. 
\]
If we evaluate $\xi _3, \xi _4, \xi _5$ at this point, we get
\[
\begin{aligned}
\xi _3&= a_1a_3f_1g_1=a_1a_3\begin{vmatrix} x_1 & x_2 \\ y_{11} & y_{12}\end{vmatrix}\cdot
\begin{vmatrix} y_{11} & y_{12} \\ y_{21} & y_{22}\end{vmatrix}\\
\xi _4&= a_1b_3f_1g_1=a_1b_3\begin{vmatrix} x_1 & x_2 \\ y_{11} & y_{12}\end{vmatrix}\cdot
\begin{vmatrix} y_{11} & y_{12} \\ y_{21} & y_{22}\end{vmatrix}\\
\xi _5&= b_1b_3f_1g_1=b_1b_3\begin{vmatrix} x_1 & x_2 \\ y_{11} & y_{12}\end{vmatrix}\cdot
\begin{vmatrix} y_{11} & y_{12} \\ y_{21} & y_{22}\end{vmatrix}\\
\end{aligned}
\]
According to Proposition \ref{stability}, we have to show that there exists a vector $v\in V$ such that
$\dim \Psi ((\mathbb Cv)\otimes \mathcal O_S)=2$. This is obvious if one of the two ($2\times 2$)-determinants 
vanishes. 

The remaining case is that $a_1a_3=a_1b_3=b_1b_3=0$, but none of the determinants vanish. 
This condition is equivalent to 
\[
a_1=b_1=0 \quad \text{or}\quad  a_3=b_3=0, 
\]
noting that $a_1b_3=a_3b_1$, for $AB-BA=O$. 
We may assume $x_2=0$ and $y_{11}=0$ by a base change on $V$. 
Assume $a_1=b_1=0$.  Then, 
\[
\Psi ((\mathbb C\begin{pmatrix}0 \\ 1\end{pmatrix})\otimes \mathcal O_S)
=\mathbb C \begin{pmatrix} y_{12} \\ y_{22} \\ y_{32}\end{pmatrix}
\oplus \mathbb C \begin{pmatrix} 0 \\ 0 \\ 1 \end{pmatrix} \subset W.
\]
On the other hand, if $a_3=b_3=0$, we have
\[
\Psi ((\mathbb C\begin{pmatrix}1 \\ 0\end{pmatrix})\otimes \mathcal O_S)
=\mathbb C \oplus \mathbb C \begin{pmatrix} 0 \\ y_{21} \\ y_{31} \end{pmatrix},  
\]
where the first $\mathbb C$ is the 1-dimensional $x$-coordinate space. 
This concludes that $\xi_3=\xi_4=\xi_5=0$ $\Rightarrow$ semistable. The converse is similar and even easier. 
\end{proof}

\subsection{}
Let us consider the last case, i.e., the case where $\gamma$ is of type $[4]$.
Let $W=\mathbb C^4$. We have 
\[
\begin{aligned}
Y_{\gamma} &\cong \Hom (V, W)\times N_4, \\
G_{\gamma} & = GL(V)\times \times GL(W),\\
\chi _{\gamma} &=(\det {}_V)^{-2}\cdot \det {}_{W}.
\end{aligned}
\]
Write $\Psi\in Y_{\gamma}$ as
\[
\Psi=\Big( \psi = \begin{pmatrix} x_{11} & x_{12}\\
                                                         x_{21} & x_{22}\\
                                                         x_{31} & x_{32}\\
                                                         x_{41} & x_{42}\end{pmatrix}, \alpha = (A,B) \Big)
\]
using a coordinate. As before, $SL(V)$-invarinants are generated by the Pl\"ucker coordinates 
and the entries of $A$ and $B$, 
and $SL(W)$ acts on the Pl\"ucker coordinates 
\[
p_{ij}=\begin{vmatrix} 
		x_{i1} & x_{i2}\\
		x_{j1} & x_{j2}\end{vmatrix}\quad (1\leqslant i<j\leqslant 4)
\]
via induced action on $\wedge ^2 W$. Note that $\wedge ^2 W\cong \mathbb C^6$ admits a non-degenerate 
symmetric form
\[
\langle \cdot, \cdot\rangle = \wedge : \wedge ^2 W \times \wedge ^2 W \to \wedge ^4 W\cong \mathbb C
\]
and we have a Lie algebra isomorphism
\[
T: \mathfrak{sl}(W)\overset{\sim}{\to} \mathfrak{so}(\wedge ^2W). 
\]
Therefore, we can translate the problem on $SL(W)$-invariants into a problem on $\mathfrak{so}(\wedge ^2W)$-invariants. 
Namely, we need to know the classical invariant theory on $SO(6)$-invariants. Again, the symbolic method 
(see \cite{PV} Theorem 9.3 and the table on p. 254) tells us the following:
\begin{quote}
Let $U=\mathbb C^{2m}$ with a non-degenerate symmetric form $\langle \cdot, \cdot\rangle$. 
Then, the ring of $SO(U)$-invariants in $\mathbb C[U\oplus \End (U)^{\oplus k}]$ is generated by
\begin{enumerate}[(a)]
\item $\langle u, W_0(X_1,\cdots ,X_k)u\rangle$, and
\item $\det (W_1(X_1,\cdots ,X_k)u,\cdots W_{2m}(X_1,\cdots ,X_k)u)$,
\end{enumerate} 
where $(u, (X_1,\cdots ,X_k))\in U\oplus \End (U)^{\oplus k}$, 
$W_i\; (i=0,\cdots ,2m)$ is any word in $X_1,\cdots ,X_k$.  
\end{quote}
Here, we have to be cautious about the fact that the isomorphism $T$ preserves the Lie bracket, 
but \emph{does not respect matrix products}. Namely, we have to include every possible non-zero words 
in $T(A^iB^j)\; (i,j\geqslant 0\text{ and }i+j<4)$, not only in $T(A)$ and $T(B)$, to get a complete generating set of invariants. 
But, we also note that we still have a reasonably bounded set of non-zero words, 
because all of $T(A^iB^j)$ are nilpotent and commute each other. 

Define $u=\displaystyle \sum _{1\leqslant i<j\leqslant 6} p_{ij}(e_i\wedge e_j)\in \wedge ^2 W$ and
\[
\begin{aligned}
\xi _1& =\langle u , T(A)^2 u\rangle, &
\xi _2& =\langle u , T(A)T(B) u\rangle, &
\xi _3& =\langle u , T(B)^2 u\rangle, \\
\xi _4& =\langle u , T(A)^4 u\rangle, &
\xi _5& =\langle u , T(A)^3T(B) u\rangle, &
\xi _6& =\langle u , T(A)^2T(B)^2 u\rangle,\\
\xi _7& =\langle u , T(A)T(B)^3 u\rangle, &
\xi _8& =\langle u , T(B)^4 u\rangle, \\
\xi _9& =\langle u , T(A^2)T(B) u\rangle, &
\xi _{10}& =\langle u , T(A)T(B^2) u\rangle. 
\end{aligned}
\]
One can check using a computer algebra system that the other non-zero invariants 
of type (a) and (b) above can be written as a polynomial of these $\xi _1,\cdots ,\xi _{10}$. 
This means that the invariant ring $A(Y_{\gamma})^{SL(V)\times SL(W)}$ is generated by
$\xi _i$'s. Moreover, the weight of $\xi _i$ is always the same as the weight of $\chi _{\gamma}$, so that 
the homogenous invariant ring $\mathscr R_{\gamma}$ is generated by 
the degree one part $\mathscr R_{\gamma,1}=\mathbb C\langle \xi _1, \cdots , \xi _{10}\rangle$. 

Now we examine the relations among $\xi _i$'s. According to a Gr\"obner basis calculation, again, 
the relations are given by 
\begin{gather}
\rank 
\begin{pmatrix} 
\xi _4 & \xi _5 & \xi _6 & \xi _7 & \xi _9\\
\xi _5 & \xi _6 & \xi _7 & \xi _8 & -\xi _{10}
\end{pmatrix}\leqslant 1, \label{[4]scroll}\\
\begin{aligned}\label{[4]hyp}
\xi _3&\xi _6 - 2\xi _2\xi _7 + \xi _1\xi _8 - \xi _{10}^2 =0,\\
\xi _3&\xi _5 - 2\xi _2\xi _6 + \xi _1\xi _7 + \xi _9\xi _{10}=0,\\
\xi _3&\xi _4 - 2\xi _2\xi _5 + \xi _1\xi _6 - \xi _9^2 =0.
\end{aligned}
\end{gather}
The determinantal equation \eqref{[4]scroll} is nothing but the defining equation of 
the singular rational scroll 
$\overline{\mathbb F}=\overline{\mathbb P}_{\mathbb P^1}(\mathcal O^{\oplus 3}\oplus \mathcal O(4)\oplus \mathcal O(1))
\subset \mathbb P^9$. The remaining three equations \eqref{[4]hyp} defines a subvariety of this scroll, 
which is isomorphic to $B_{\gamma}$. Take the natural proper birational morphism
\[
\mu : 
\mathbb F=\mathbb P_{\mathbb P^1}(\mathcal O^{\oplus 3}\oplus \mathcal O(4)\oplus \mathcal O(1))\to 
\overline{\mathbb F}
\]
and consider the strict transform $B'_{\gamma}=\mu _*^{-1}B_{\gamma}$. In terms of 
the bihomogeneous coordinate $(t_1,t_2; \zeta _1,\cdots ,\zeta _5)$ on $\mathbb F$, $\mu$ is described by 
\[
\begin{aligned}
\mu ^*\xi _1&=\zeta _1, & \mu ^*\xi _2&=\zeta _2, & \mu ^*\xi _3&=\zeta _3, \\
\mu ^*\xi _4&=t_1^4\zeta_4, & \mu ^*\xi _5&=t_1^3t_2\zeta_4, &
\mu ^*\xi _6&=t_1^2t_2^2\zeta_4, & \mu ^*\xi _7&=t_1t_2^3\zeta_4, & \mu ^*\xi _8&=t_2^4\zeta_4, \\
\mu ^*\xi _9&=t_1\zeta_5, & \mu ^*\xi _{10}&=-t_2\zeta_5. 
\end{aligned}
\]
Pulling back the equations \eqref{[4]hyp} by these relations, we know that $B'_{\gamma}$ is a 
zero locus of a single equation
\begin{equation}\label{[4]fiber}
(t_2^2\zeta _1 - 2t_1t_2\zeta _2 +t_1^2\zeta _3)\cdot \zeta _4-\zeta _5^2=0. 
\end{equation}
This implies $B'_{\gamma}$ restricted to a fiber $\mathbb P^4$ of $\mathbb F\to \mathbb P^1$ is 
a rank 2 quadric. In particular, $B_\gamma$ is a prime divisor in the singular scroll $\overline{\mathbb F}$. 
The image of the family of the vertex lines of the quadrics \eqref{[4]fiber} is contained in 
the plane $\Pi=\mathbb P^2 [\xi_1:\xi_2:\xi_3]$ and defined by 
\[
t_2^2\xi _1 - 2t_1t_2\xi _2 +t_1^2\xi _3=0\quad ([t_1:t_2]\in \mathbb P^1). 
\]
The envelope $\Delta$ of this family of lines on $\Pi$ is a smooth conic defined by
\[
\Delta : \xi_1\xi_3-\xi_2^2=0. 
\]

\begin{claim}
$(B_{\gamma}\cap \Sigma)_{red}=\Pi$, i.e., $\Psi\in Y_{\gamma}$ determines a strictly semistable sheaf 
if and only if $\xi _4=\cdots =\xi _{10}=0$. 
\end{claim}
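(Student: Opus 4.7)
Both implications will be established. Recall that by Proposition~\ref{stability}, strict semistability of $\Psi$ is equivalent to $\Psi$ being semistable together with the existence of a nonzero $v \in V$ for which the $R$-submodule $M := R \cdot \psi(v) \subset W$ has dimension exactly $2$, where $R := \mathbb{C}[A, B]/(A, B)^4$.

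For the forward direction, assume $\Psi$ is strictly semistable and choose $v_2 \in V$ with $M := R \cdot \psi(v_2)$ of dimension $2$; complete to a basis $\{e_1, e_2\}$ of $V$ and set $v_i := \psi(e_i)$. Two observations drive the proof. First, since $A|_M$ and $B|_M$ are commuting nilpotent operators on the 2-dimensional space $M$, we have $(A, B)^2 \cdot M = 0$, so every word $W(A, B)$ of total degree $\geq 2$ annihilates $v_2$. Second, the quotient $W/M$ is also 2-dimensional with commuting nilpotent induced action, so $W(A, B) v_1 \in M$ for every such word. Expanding
\[
T(A)^i T(B)^j(v_1 \wedge v_2) = \sum_{k, l} \binom{i}{k}\binom{j}{l} A^k B^l v_1 \wedge A^{i-k} B^{j-l} v_2
\]
for $i + j = 4$, each term either contains $A^{i-k} B^{j-l} v_2 = 0$ (when the exponents on $v_2$ sum to $\geq 2$) or has $A^k B^l v_1 \in M$ (when those on $v_1$ sum to $\geq 3$); in the latter case the wedge $v_1 \wedge v_2 \wedge A^k B^l v_1 \wedge A^{i-k} B^{j-l} v_2$ has three vectors in the 2-dimensional $M$ and hence vanishes. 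The same argument, applied to $\xi_9 = \langle u, T(A^2) T(B) u\rangle$ and $\xi_{10} = \langle u, T(A) T(B^2) u\rangle$ using $A^2 v_2 = AB v_2 = B^2 v_2 = 0$, shows these also vanish, so $\xi_4 = \cdots = \xi_{10} = 0$.

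For the converse, suppose $\Psi$ is semistable with $\xi_4 = \cdots = \xi_{10} = 0$. Let $h_i := \dim(\mathfrak{m}^i W / \mathfrak{m}^{i+1} W)$; surjectivity of $\Psi$ together with Nakayama's lemma force $h_0 \leq 2$. If $h_0 = 2$, then $W$ is not cyclic, so $\dim R \cdot \psi(v) \leq 3$ for every $v$; a direct analysis of the two possible Hilbert functions $(2, 2, 0, 0)$ and $(2, 1, 1, 0)$ in a normal form for $(A, B)$ shows that either a quadratic form on $\mathbb{P}(V)$ has a root, or the induced maps $\bar A, \bar B \colon W/\mathfrak{m} W \to \mathfrak{m} W / \mathfrak{m}^2 W$ have a non-trivial common kernel (in $(2, 1, 1, 0)$ the commutation relation forces $\bar A, \bar B$ to be linearly dependent functionals on $W/\mathfrak{m} W$), exhibiting $v$ with $\dim R \cdot \psi(v) = 2$. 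Hence $\Psi$ is automatically strictly semistable in this case and the $\xi_i$ hypothesis is not even needed. If $h_0 = 1$, then $W = R w_0$ is cyclic and the kernel of $V \to W/\mathfrak{m} W \cong \mathbb{C}$ is spanned by some $v_0$ with $\psi(v_0) \in \mathfrak{m} W$. For Hilbert function $(1, 1, 1, 1)$, put $A$ in regular Jordan form and $B = f(A) \in \mathbb{C}[A]$ via $GL(W)$; writing $\psi(v_0) = c_1 A + c_2 A^2 + c_3 A^3$, direct expansion of $T(A)^4 u$ using $A^4 = 0$ yields $\xi_4 = 2 c_1^2 \cdot (1 \wedge A \wedge A^2 \wedge A^3)$, so $\xi_4 = 0$ forces $c_1 = 0$, i.e. $\psi(v_0) \in \mathfrak{m}^2 W$, and semistability then pins $\dim R \cdot \psi(v_0) = 2$. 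The Hilbert function $(1, 2, 1, 0)$ is handled by an analogous normal form, and in fact $\mathfrak{m} W$ itself is non-cyclic there, already forcing $\dim R \cdot \psi(v_0) \leq 2$.

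The forward direction is essentially formal once the invariant subspace $M$ is identified and its consequences on $A, B$ are exploited. The main obstacle lies in the converse, and specifically in the cyclic case with Hilbert function $(1, 1, 1, 1)$: strict semistability is not automatic there and must be deduced from the vanishing of the single invariant $\xi_4$. The essential technical step is the $GL(W)$-normalization of $(A, B)$ to Jordan form followed by the explicit coordinate computation of $\xi_4$, mirroring the analogous claim in the type $[1, 3]$ case of \S 3.4.
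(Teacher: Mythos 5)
Your overall route is genuinely different from the paper's. The paper proves only the implication ($\xi_4=\cdots=\xi_{10}=0$ plus semistable $\Rightarrow$ strictly semistable): it puts $A,B$ simultaneously in strictly lower-triangular form, evaluates the seven invariants in those coordinates, uses $AB=BA$ to get $a_1:a_3:a_6=b_1:b_3:b_6$, and in each of the three resulting cases exhibits an explicit $2$-dimensional image $\Psi((\mathbb Cv)\otimes\mathcal O_S)$; the reverse implication is left implicit. You instead argue structurally through the $\mathfrak m$-adic filtration of $W$, $\mathfrak m=(A,B)$. Your forward direction is correct and complete: once the $2$-dimensional submodule $M=R\cdot\psi(v_2)$ is identified, all words of degree $\geq 2$ kill $M$ and map $W$ into $M$, and the Leibniz expansion of $T(A)^iT(B)^j$ kills $\xi_4,\dots,\xi_{10}$ term by term --- a cleaner argument than coordinates, and it covers the direction the paper omits. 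Your reduction of the converse to the cyclic case of Hilbert function $(1,1,1,1)$ is also sound: in types $(2,2)$, $(2,1,1)$ and $(1,2,1)$ strict semistability is indeed automatic for semistable $\Psi$ (root of a binary quadric, forced dependence of $\bar A,\bar B$, resp.\ non-cyclicity of $\mathfrak mW$), consistently with Proposition \ref{stability}(ii).

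However, in the one case where the hypothesis $\xi_4=\cdots=\xi_{10}=0$ is actually needed, there is a genuine gap: the normalization ``put $A$ in regular Jordan form and $B=f(A)\in\mathbb C[A]$ via $GL(W)$'' is not available in general. Cyclicity with Hilbert function $(1,1,1,1)$ only guarantees that \emph{some} pencil member $N=sA+tB$ is a regular nilpotent, with $A=f(N)$, $B=g(N)$, $f,g$ without constant term and $(f'(0),g'(0))\neq(0,0)$; the group acts by conjugation only and cannot replace $A$ by a combination of $A$ and $B$. Concretely, take $(A,B)=(N^2,N)$ with $N$ regular: the module is cyclic of type $(1,1,1,1)$, $A$ is \emph{not} conjugate to a regular Jordan block, and $T(A)^4u$ vanishes identically (every term contains $A^2=N^4=0$), so $\xi_4\equiv 0$ and the step ``$\xi_4=0$ forces $c_1=0$'' concludes nothing --- while stable points do exist in this stratum, detected there by $\xi_8=2c_1^2\,(w_0\wedge Nw_0\wedge N^2w_0\wedge N^3w_0)$. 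The repair is routine but must be stated: at least one of $f'(0),g'(0)$ is nonzero (otherwise $\mathfrak mW\subseteq N^2W$ would force $\dim W/\mathfrak mW\geq 2$, contradicting cyclicity); if $f'(0)\neq 0$ then $A$ itself is regular, $\mathbb C[A]=\mathbb C[N]$, and your computation of $\xi_4$ applies verbatim, whereas if $f'(0)=0$ you must run the symmetric computation with $\xi_8=\langle u,T(B)^4u\rangle$ in place of $\xi_4$. As written, your argument silently excludes exactly the stable points with $A$ non-regular.
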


\begin{proof}
We show that there exists a vector $v\in V$ such that
$\dim \Psi ((\mathbb Cv)\otimes \mathcal O_S)=2$ (Proposition \ref{stability} (ii)) 
if $\Psi$ is a semistable point satisfying $\xi _4=\cdots =\xi _{10}=0$. 

As before, we may assume that $\Psi = (\psi ,(A,B))\in Y_{\gamma}$ satisfies
\[
A=
\begin{pmatrix}
0 & 0 & 0 & 0\\
a_1 & 0 & 0 & 0\\
a_2 & a_3 & 0 & 0\\
a_4 & a_5 & a_6 & 0
\end{pmatrix},\quad 
B=
\begin{pmatrix}
0 & 0 & 0 & 0\\
b_1 & 0 & 0 & 0\\
b_2 & b_3 & 0 & 0\\
b_4 & b_5 & b_6 & 0
\end{pmatrix}.
\]
If we evaluate the functions $\xi _4,...,\xi _{10}$ at such a point, we get
\[
\begin{aligned}
\xi _4&=2 a_1 a_3^2 a_6 \cdot p_{12}^2, \quad
\xi _5=2 a_1 a_3 a_6b_3 \cdot p_{12}^2, \\
\xi _6 &=2 (a_1 a_3 b_3 b_6 + a_3 a_6 b_1 b_3) \cdot p_{12}^2, \\
\xi _7&=2 a_3 b_1 b_3 b_6 \cdot p_{12}^2,\quad 
\xi _8=2 b_1 b_3^2 b_6 \cdot p_{12}^2, \\ 
\xi _9&=(a_5 b_1 b_3 - a_3 b_1 b_5 - a_3 b_2 b_6 + a_2 b_3 b_6) p_{12}^2 + (a_6 b_1 b_3 - a_1 b_3 b_6) p_{12} p_{13}\\
\xi _{10} &=(a_3 a_6 b_2 - a_1 a_5 b_3 - a_2 a_6 b_3 + a_1 a_3 b_5 ) p_{12}^2 +(a_1 a_3 b_6 - a_3 a_6 b_1) p_{12} p_{13}
\end{aligned}
\]
If $p_{12}=\begin{vmatrix} x_{11} & x_{12} \\ x_{21} & x_{22}\end{vmatrix}=0$, we may assume 
$x_{12}=x_{22}=0$. Then, 
\[
\Psi ((\mathbb C\begin{pmatrix}0 \\ 1\end{pmatrix})\otimes \mathcal O_S)
=\mathbb C \begin{pmatrix} 0 \\ 0 \\ 1 \\ 0 \end{pmatrix}
\oplus \mathbb C \begin{pmatrix} 0 \\ 0 \\ 0\\ 1 \end{pmatrix}.
\]
So, let us assume $p_{12}\neq 0$. We may assume 
$\begin{pmatrix} x_{11} & x_{12} \\ x_{21} & x_{22}\end{pmatrix}=\begin{pmatrix} 1 & 0 \\ 0 & 1\end{pmatrix}$ 
by a coordinate change. Note that $AB-BA=O$ implies
\[
a_1:a_3:a_6=b_1:b_3:b_6.
\]
Thus, $\xi _4=\cdots =\xi _8=0$ implies
\[
a_1=b_1=0\quad \text{ or }\quad a_3=b_3=0\quad \text { or }\quad  a_6=b_6=0.
\]

First, we assume $a_3=b_3=0$. In this case, we automatically have $\xi_9=\xi _{10}=0$.  Then, 
\[
\Psi ((\mathbb C\begin{pmatrix}0 \\ 1\end{pmatrix})\otimes \mathcal O_S)
=\mathbb C \begin{pmatrix} 0 \\ 1 \\ x_{22} \\ x_{32} \end{pmatrix}
\oplus \mathbb C \begin{pmatrix} 0 \\ 0 \\ 0\\ 1 \end{pmatrix}.
\]

Next, we assume $a_6=b_6=0$. Then $AB-BA$ implies $a_3:a_5=b_3:b_5$, i.e., two vectors
\[
\begin{pmatrix}
0 \\ 0 \\ a_3 \\ a_5
\end{pmatrix}\quad \text{and} \quad
\begin{pmatrix}
0 \\ 0 \\ b_3 \\ b_5
\end{pmatrix}
\] 
are proportional to each other, and $\xi _9=\xi _{10}=0$ is again automatic. By the semistability of $\Psi$, 
one of these two vectors, say ${}^t (0,0,a_3, a_5)$, does not vanish. Then, 
\[
\Psi ((\mathbb C\begin{pmatrix}0 \\ 1\end{pmatrix})\otimes \mathcal O_S)
=\mathbb C \begin{pmatrix} 0 \\ 1 \\ x_{22} \\ x_{32} \end{pmatrix}
\oplus \mathbb C \begin{pmatrix} 0 \\ 0 \\ a_3\\ a_5 \end{pmatrix}. 
\]

Finally, assume $a_1=b_1=0$. In this case, we have $a_2:a_3:a_6=b_2:b_3:b_6$, so that
$\xi _9=\xi _{10}=0$. Then, 
\[
A\psi\begin{pmatrix} a_3 \\ -a_2\end{pmatrix} 
= \begin{pmatrix} 0 \\ 0 \\ 0 \\ * \end{pmatrix} =B\psi\begin{pmatrix} a_3 \\ -a_2\end{pmatrix}.
\]
Therefore, 
\[
\Psi ((\mathbb C\begin{pmatrix} a_3 \\ -a_2 \end{pmatrix})\otimes \mathcal O_S)
=\mathbb C \begin{pmatrix} a_3 \\ -a_2 \\ a_3x_{21}-a_2x_{22} \\ a_3x_{31}-a_2x_{32} \end{pmatrix}
\oplus \mathbb C \begin{pmatrix} 0 \\ 0 \\ 0\\ 1 \end{pmatrix}. 
\]
\end{proof}

\begin{claim}
$(B_\gamma \cap \Sigma ^1)_{red}=\Delta$.
\end{claim}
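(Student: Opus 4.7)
The plan is to exploit the sheaf-theoretic description of $\Pi$ and reduce the claim to a one-parameter computation.

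From the preceding claim, $\Pi = (B_\gamma \cap \Sigma)_{red}$ parametrizes S-equivalence classes $[I_{Z_1} \oplus I_{Z_2}]$ with $Z_1, Z_2 \in \Hilb^2(S)$ supported at $p$; equivalently, $\Pi \cong \Sym^2(\Hilb^2_p(S))$. By Krull--Schmidt uniqueness for polystable sheaves, $[I_{Z_1}\oplus I_{Z_2}] = [I_Z^{\oplus 2}]$ if and only if $Z_1 = Z_2 = Z$, so $(B_\gamma \cap \Sigma^1)_{red}$ is exactly the image of the diagonal embedding $\Hilb^2_p(S) \hookrightarrow \Sym^2(\Hilb^2_p(S))$. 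Since $\Hilb^2_p(S) \cong \mathbb{P}(\mathfrak{m}_p/\mathfrak{m}_p^2) \cong \mathbb{P}^1$ and the diagonal of $\Sym^2 \mathbb{P}^1 \cong \mathbb{P}^2$ is the Veronese conic, we already know abstractly that $(B_\gamma \cap \Sigma^1)_{red}$ is a smooth conic in $\Pi$.

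Both $(B_\gamma \cap \Sigma^1)_{red}$ and $\Delta = \{\xi_1 \xi_3 - \xi_2^2 = 0\}$ are smooth conics in $\Pi \cong \mathbb{P}^2 [\xi_1 : \xi_2 : \xi_3]$, so to prove equality it suffices to exhibit a one-parameter family of $\Sigma^1$-points sweeping out a dense open subset of $\Delta$. I would take local coordinates $(t_1, t_2)$ at $p$, the family $Z_\epsilon = V(t_1 - \epsilon t_2, t_2^2) \in \Hilb^2_p(S)$, and represent $I_{Z_\epsilon}^{\oplus 2}$ by the canonical quotient $\Psi_\epsilon \colon \mathcal{O}_S^{\oplus 2} \twoheadrightarrow \mathcal{O}_{Z_\epsilon}^{\oplus 2}$. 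With the basis $\{(1,0),(t_2,0),(0,1),(0,t_2)\}$ of $\mathcal{O}_{Z_\epsilon}^{\oplus 2}$, the multiplication operators $A = t_1$ and $B = t_2$ take the strict lower-triangular form prescribed for $N_4$, and the Pl\"ucker vector attached to $\Psi_\epsilon$ is $u = e_1 \wedge e_3$. A direct evaluation of $\xi_1 = \langle u, T(A)^2 u\rangle$, $\xi_2 = \langle u, T(A)T(B) u\rangle$, $\xi_3 = \langle u, T(B)^2 u\rangle$ yields $[\xi_1 : \xi_2 : \xi_3] = [\epsilon^2 : \epsilon : 1]$, which traces out a dense open of $\Delta$ as $\epsilon$ varies; the remaining point $[1:0:0]$ is covered by $V(t_2, t_1^2)$.

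The main obstacle is purely bookkeeping: one has to pick a basis of $\mathcal{O}_{Z_\epsilon}^{\oplus 2}$ adapted to the strict lower-triangular normal form of $N_4$ and track the sign convention in the nondegenerate symmetric form $\langle\cdot,\cdot\rangle = \wedge$ on $\wedge^2 W$ when evaluating the invariants. Once the normalizations are fixed the computation is mechanical, and the conceptual content of the proof is just Krull--Schmidt plus the Veronese identification $\Sym^2 \mathbb{P}^1 \cong \mathbb{P}^2$.
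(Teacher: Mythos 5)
Your one-parameter computation is correct, and it is in fact the same computation as the paper's, specialized to the diagonal: the paper also passes to the strictly lower-triangular normal form for $(A,B)\in N_4$ with $\psi$ the standard inclusion (so $u=e_1\wedge e_3$, $p_{13}=1$, all other $p_{ij}=0$), and evaluates $\xi_1=-2a_1a_6$, $\xi_2=-(a_1b_6+a_6b_1)$, $\xi_3=-2b_1b_6$; your family is the specialization $a_1=a_6=\epsilon$, $b_1=b_6=1$, and your values $[\xi_1:\xi_2:\xi_3]=[\epsilon^2:\epsilon:1]$ agree with this up to the harmless factor $-2$. The difference is that the paper keeps all four parameters, i.e.\ evaluates on the full two-parameter family of polystable points $I_{Z_1}\oplus I_{Z_2}$, notes that $\Ker\Psi\cong I_Z^{\oplus 2}$ exactly when $a_1:b_1=a_6:b_6$, and reads off both inclusions at once from $\xi_1\xi_3-\xi_2^2=-(a_1b_6-a_6b_1)^2$. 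You instead compute only along $Z_1=Z_2$ and outsource the reverse inclusion $(B_\gamma\cap\Sigma^1)_{red}\subseteq\Delta$ to an abstract argument, and that is where the one genuine soft spot lies.

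The soft spot: the previous claim identifies $(B_\gamma\cap\Sigma)_{red}$ with $\Pi$ only \emph{set-theoretically}, so the bijection $\Sym^2(\Hilb^2_p(S))\to\Pi$ is not yet known to be an isomorphism of varieties in the coordinates $[\xi_1:\xi_2:\xi_3]$; the assertion that the diagonal is therefore a smooth (Veronese) conic \emph{in these coordinates} is precisely what the full two-parameter evaluation establishes, so you cannot invoke ``both are smooth conics'' without circularity. Fortunately the repair costs nothing beyond what you already wrote: your family $\{Z_\epsilon\}_{\epsilon\in\mathbb C}\cup\{V(t_2,t_1^2)\}$ \emph{exhausts} $\Hilb^2_p(S)$, and by your Krull--Schmidt observation every point of $(B_\gamma\cap\Sigma^1)_{red}$ is a class $[I_Z^{\oplus 2}]$ with $Z$ in this family; since the degree-one semi-invariants $\xi_1,\dots,\xi_{10}$ compute the quotient map $Y_\gamma^{ss}\to B_\gamma\subset\mathbb P^9$ and may be evaluated at the polystable representative $\Psi_\epsilon$ (where $\xi_3\neq 0$), your evaluation already computes \emph{every} point of $(B_\gamma\cap\Sigma^1)_{red}$, and all the values $[\epsilon^2:\epsilon:1]$, $[1:0:0]$ lie on $\Delta$. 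This gives $(B_\gamma\cap\Sigma^1)_{red}\subseteq\Delta$ directly, and together with your sweep of $\Delta$ it closes the proof with no appeal to the Veronese identification. (Alternatively: $(B_\gamma\cap\Sigma^1)_{red}$ is the image of the irreducible projective curve $\Hilb^2_p(S)\cong\mathbb P^1$ under the classifying morphism $Z\mapsto[I_Z^{\oplus 2}]$, hence an irreducible closed curve, and an irreducible closed curve containing the dense subset $\{[\epsilon^2:\epsilon:1]\}$ of the irreducible curve $\Delta$ must equal $\Delta$.)
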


\begin{proof}
Thanks to the previous claim, we can restrict ourselves to the locus of $\Psi$ such that 
$\Ker \Psi=I_{Z_1}\oplus I_{Z_2}$, where $Z_i\in \Hilb ^2(S),\; \Supp (Z)=\{p\}\; (\gamma =4p)$. 
Namely, take $\Psi$ as
\[
\Psi=\Big( \psi = \begin{pmatrix} 1 & 0\\
                                                         0 & 0\\
                                                         0 & 1\\
                                                         0 & 0\end{pmatrix}, \big(
                                                       	A=\begin{pmatrix}
						0 & 0 & 0 & 0\\
						a_1 & 0 & 0 & 0\\
						0 & 0 & 0 & 0\\
						0 & 0 & a_6 & 0
						\end{pmatrix},
						B=\begin{pmatrix}
						0 & 0 & 0 & 0\\
						b_1 & 0 & 0 & 0\\
						0 & 0 & 0 & 0\\
						0 & 0 & b_6 & 0
						\end{pmatrix}\big) \Big). 
\]
Note that $\Ker \Psi$ is of the form $I_Z^{\oplus 2}$ exactly if $a_1:b_1=a_6:b_6$. 
If we evaluate $\xi_1$, $\xi _2$, and $\xi _3$ at such a point, 
\[
\xi _1=-2a_1a_6,\quad \xi _2=-a_1b_6-a_6b_1,\quad \xi _3=-2b_1b_6.
\]
Therefore, $a_1:b_1=a_6:b_6$ if and only if $\xi_1\xi_3-\xi _2^2=0$ (note that we are considering 
only the reduced structure). 
\end{proof}


\begin{bibdiv}
\begin{biblist}

\bib{B}{article}{
   author={Beauville, Arnaud},
   title={Vari\'et\'es K\"ahleriennes dont la premi\`ere classe de Chern est
   nulle},
   language={French},
   journal={J. Differential Geom.},
   volume={18},
   date={1983},
   number={4},
   pages={755--782 (1984)},
   issn={0022-040X},
}

\bib{S}{article}{
  title = {{\sc Singular} --- {A} computer algebra system for polynomial computations},
  year = {2009},
  author = {Greuel, G.-M.},
  author = {Pfister, G.},
  author = {Sch{\"o}nemann, H.},
  note = {http://www.singular.uni-kl.de},
}

\bib{H-L}{book}{
   author={Huybrechts, Daniel},
   author={Lehn, Manfred},
   title={The geometry of moduli spaces of sheaves},
   series={Aspects of Mathematics, E31},
   publisher={Friedr. Vieweg \& Sohn},
   place={Braunschweig},
   date={1997},
   pages={xiv+269},
   isbn={3-528-06907-4},
}

\bib{K}{article}{
   author={King, A. D.},
   title={Moduli of representations of finite-dimensional algebras},
   journal={Quart. J. Math. Oxford Ser. (2)},
   volume={45},
   date={1994},
   number={180},
   pages={515--530},
   issn={0033-5606},
}

\bib{L-S}{article}{
   author={Lehn, Manfred},
   author={Sorger, Christoph},
   title={La singularit\'e de O'Grady},
   language={French, with English and French summaries},
   journal={J. Algebraic Geom.},
   volume={15},
   date={2006},
   number={4},
   pages={753--770},
   issn={1056-3911},
}

\bib{N}{book}{
   author={Nakajima, Hiraku},
   title={Lectures on Hilbert schemes of points on surfaces},
   series={University Lecture Series},
   volume={18},
   publisher={American Mathematical Society},
   place={Providence, RI},
   date={1999},
   pages={xii+132},
   isbn={0-8218-1956-9},
}

\bib{OG}{article}{
   author={O'Grady, Kieran G.},
   title={Desingularized moduli spaces of sheaves on a $K3$},
   journal={J. Reine Angew. Math.},
   volume={512},
   date={1999},
   pages={49--117},
   issn={0075-4102},
   doi={10.1515/crll.1999.056},
}

\bib{P}{article}{
   author={Perego, Arvid}, 
   title={The 2-Factoriality of the O'Grady Moduli Spaces},
   journal={preprint, arXiv:0903.3211},
   date={2009},
}

\bib{PV}{article}{
   author={Popov, V. L.},
   author={Vinberg, E. B.},
   title={Invariant theory},
   book={
	title={Algebraic geometry. IV},
	series={Encyclopaedia of Mathematical Sciences},
   	volume={55},
   	editor={Shafarevich, I. R.},
	publisher={Springer-Verlag},
   	place={Berlin},
      	pages={123--278},
   },
   language={English translation from Russian edition (1989)},
   date={1994},
}

\bib{R}{article}{
   author={Rapagnetta, Antonio},
   title={On the Beauville form of the known irreducible symplectic
   varieties},
   journal={Math. Ann.},
   volume={340},
   date={2008},
   number={1},
   pages={77--95},
   issn={0025-5831},
   doi={10.1007/s00208-007-0139-6},
}

\end{biblist}
\end{bibdiv} 

\end{document}